\definecolor{Red}{cmyk}{0,1,1,0}
\definecolor{verde}{cmyk}{1,0,1,0}
\definecolor{loka}{cmyk}{.5,0,1,.5}
\definecolor{azul}{cmyk}{1,1,0,0}
\numberwithin{equation}{section}
\newcommand{\be}{\begin{equation}}
\newcommand{\ee}{\end{equation}}
\newtheorem{theorem}{Theorem}
\newtheorem{definition}{Definition}
\newtheorem{lemma}{Lemma}
\newtheorem{remark}{Remark}
\newtheorem{corollary}[equation]{Corollary}
\begin{document}
\title{A Gronwall inequality and the Cauchy-type problem by means of $\psi$-Hilfer operator}
\author{J. Vanterler da C. Sousa$^1$}
\address{$^1$ Department of Applied Mathematics, Institute of Mathematics,
 Statistics and Scientific Computation, University of Campinas --
UNICAMP, rua S\'ergio Buarque de Holanda 651,
13083--859, Campinas SP, Brazil\newline
e-mail: {\itshape \texttt{ra160908@ime.unicamp.br, capelas@ime.unicamp.br }}}

\author{E. Capelas de Oliveira$^1$}

\begin{abstract}In this paper, we propose a generalized Gronwall inequality through the fractional integral with respect to another function. The Cauchy-type problem for a nonlinear differential equation involving the $\psi$-Hilfer fractional derivative and the existence and uniqueness of solutions are discussed. Finally, through generalized Gronwall inequality, we prove the continuous dependence of data on the Cauchy-type problem.

\vskip.5cm
\noindent
\emph{Keywords}:$\psi$-Hilfer fractional derivative, Cauchy-type problem, Existence and Uniqueness, Continuous Dependence, Generalized Gronwall Inequality.
\newline 
MSC 2010 subject classifications. 26A33; 36A08; 34A12; 34A40.
\end{abstract}
\maketitle

\section{Introduction}

Over the decades, the fractional calculus has been building a great history and consolidating itself in several scientific areas such as: mathematics, physics and engineering, among others. The emergence of new fractional integrals and derivatives, makes the wide number of definitions becomes increasingly larger and clears its numerous applications \cite{AHMJ,SAM,RHM}. Recently, Sousa and Oliveira \cite{JEM1} introduced the so-called $\psi$-Hilfer fractional derivative with respect to another function, in order to unify the wide number of fractional derivatives in a single fractional operator and consequently, open a window for new applications.

Problems of initial values for the so-called fractional order differential equations that emerge and describe linear and nonlinear phenomena have obtained much attention in the scientific community and especially in engineering. Recently, there are several researchers \cite{PRIN,EDP,EDP1,EDP2,EDP3,EDP4,EDP5,EDP6} that have used fractional differential equations to model natural phenomena. Gronwall inequality is one of the tools used to study the existence, uniqueness and continuous dependence of the Cauchy's problem solutions on data and, in addition, other important applications might be found \cite{DRA}. 

Consider the following Cauchy-type problem for the fractional differential equation \cite{PRIN}
\begin{eqnarray}  \label{ze}
D_{a+}^{\alpha ,\beta }y\left( x\right) &=&f\left( x,y\left(
x\right) \right) \text{, }0<\alpha <1\text{, }0\leq \beta \leq 1 \\
I_{a+}^{1-\gamma}y\left( a\right) &=&y_{a}\text{, \ \ \ \ \ \ \ \ \ \ 
}\gamma =\alpha +\beta \left( 1-\alpha \right),
\end{eqnarray}
where $I_{a+}^{1-\gamma}(\cdot)$ is the fractional integral in the sense of Riemann-Liouville and $D_{a+}^{\alpha ,\beta }(\cdot) $ is the Hilfer fractional derivative.

In this paper, we propose a generalized Gronwall inequality using the fractional integral of a function $f$ with respect to another function $\psi$, and from it to study the continuous dependence of the Cauchy-type problem Eq.(\ref{ze})-Eq.(1.2) on data as well as proposing and discussing the existence and uniqueness by means of $\psi$-Hilfer fractional derivative \cite{JEM1}.

The paper is organized as follows:. in section 2, we begin with the definition of some spaces of functions, the weighted spaces among them. In this section, we present the definition of fractional integral of a function $f$ with respect to another function $\psi$, in a addition to the definitions of $\psi$-Riemann-Liouville fractional derivative, $\psi$-Caputo fractional derivative and $\psi$-Hilfer fractional derivative. From these definitions, some important results are introduced for the development of the paper. Finally, the definition of the Lipschitz condition, is presented. In section 3, we discuss the generalized Gronwall inequality by means of fractional integral with respect to another $\psi$ function. Also, two consonant results of the generalized Gronwall inequality theorem are demonstrated. In section 4, we present the version of the Cauchy-type problem for the $\psi$-Hilfer fractional derivative, showing: this problem is equivalent to solve an Volterra integral equation. From this, we study the existence and uniqueness of Cauchy-type problem. In section 5, using the generalized Gronwall inequality, we address the continuous dependence of the Cauchy-type problem. Concluding remarks close the paper.

\section{Preliminaries}
First, we present the weighted spaces and some definitions and important results for the development of the paper.

Let $[a,b]$ $(0<a<b<\infty)$ be a finite interval on the half-axis $\mathbb{R}^{+}$ and $C[a,b]$, $AC^{n}[a,b]$, $C^{n}[a,b]$ be the spaces of continuous functions, $n$-times absolutely continuous and $n$-times continuously differentiable functions on $[a,b]$, respectively. 

The space of continuous function $f$ on $[a,b]$ with the norm is defined by \cite{AHMJ}
\begin{equation*}
\left\Vert f\right\Vert _{C\left[ a,b\right] }=\underset{t\in \left[ a,b \right] }{\max }\left\vert f\left( t\right) \right\vert.
\end{equation*}

On the other hand, we have $n$-times absolutely continuous functions given by
\begin{equation*}
AC^{n}\left[ a,b\right] =\left\{ f:\left[ a,b\right] \rightarrow \mathbb{R}
;\text{ }f^{\left( n-1\right) }\in AC\left( \left[ a,b\right] \right)
\right\} .
\end{equation*}

The weighted spaces $C_{\gamma,\psi}[a,b]$ and $C_{1-\gamma,\psi}[a,b]$ of functions $f$ on $[a,b]$ are defined by
\begin{equation*}
C_{\gamma ;\psi }\left[ a,b\right] =\left\{ f:\left( a,b\right] \rightarrow 
\mathbb{R};\text{ }\left( \psi \left( t\right) -\psi \left( a\right) \right)
^{\gamma }f\left( t\right) \in C\left[ a,b\right] \right\} ,\text{ }0\leq \gamma <1,
\end{equation*}
\begin{equation*}
C_{1-\gamma ;\psi }\left[ a,b\right] =\left\{ f:\left( a,b\right]
\rightarrow \mathbb{R};\text{ }\left( \psi \left( t\right) -\psi \left(
a\right) \right) ^{1-\gamma }f\left( t\right) \in C\left[ a,b\right]
\right\} ,\text{ }0\leq \gamma <1,
\end{equation*}
respectively, with the norm
\begin{equation}\label{space}
\left\Vert f\right\Vert _{C_{\gamma ;\psi }\left[ a,b\right] }=\left\Vert
\left( \psi \left( t\right) -\psi \left( a\right) \right) ^{\gamma}f\left(
t\right) \right\Vert _{C\left[ a,b\right] }=\underset{t\in \left[ a,b\right] 
}{\max }\left\vert \left( \psi \left( t\right) -\psi \left( a\right) \right)
^{\gamma }f\left( t\right) \right\vert
\end{equation}
and
\begin{equation}\label{space1}
\left\Vert f\right\Vert _{C_{1-\gamma ;\psi }\left[ a,b\right] }=\left\Vert
\left( \psi \left( t\right) -\psi \left( a\right) \right) ^{1-\gamma}f\left(
t\right) \right\Vert _{C\left[ a,b\right] }=\underset{t\in \left[ a,b\right] 
}{\max }\left\vert \left( \psi \left( t\right) -\psi \left( a\right) \right)
^{1-\gamma }f\left( t\right) \right\vert.
\end{equation}

The weighted space $C_{\gamma ;\psi }^{n}\left[ a,b\right]$ of function $f$ on $[a,b]$ is defined by
\begin{equation*}
C_{\gamma;\psi }^{n}\left[ a,b\right] =\left\{ f:\left( a,b\right]
\rightarrow \mathbb{R};\text{ }f\left( t\right) \in C^{n-1}\left[ a,b\right] ;\text{ }f^{\left( n\right) }\left( t\right) \in C_{\gamma;\psi }\left[ a,b\right] \right\} ,\text{ }0\leq \gamma <1
\end{equation*}
with the norm
\begin{equation*}
\left\Vert f\right\Vert _{C_{\gamma ;\psi }^{n}\left[ a,b\right] }=\overset{n-1}{\underset{k=0}{\sum }}\left\Vert f^{\left( k\right) }\right\Vert _{C\left[ a,b\right] }+\left\Vert f^{\left( n\right) }\right\Vert _{C_{\gamma ;\psi }\left[ a,b\right] }.
\end{equation*}

For $n=0$, we have, $C_{\gamma }^{0}\left[ a,b\right] =C_{\gamma }\left[ a,b\right] $.

As one of the objectives of this article is to propose a generalized Gronwall inequality, we present the inequality of Gronwall recently introduced \cite{GRON}. Let $u,$ $v$ be two integrable functions and $g$ a continuous function, with domain $\left[ a,b\right]$. Assume that: $u$ and $v$ are nonnegative; $g$ is nonnegative and nondecreasing. If
\begin{equation*}
u\left( t\right) \leq v\left( t\right) +g\left( t\right) \rho ^{1-\alpha
}\int_{a}^{t}\tau ^{\rho -1}\left( t^{\rho }-\tau ^{\rho }\right) ^{\alpha
-1}u\left( \tau \right) d\tau ,
\end{equation*}%
then
\begin{equation*}
u\left( t\right) \leq v\left( t\right) +\int_{a}^{t}\overset{\infty }{%
\underset{k=1}{\sum }}\frac{\rho ^{1-k\alpha }\left[ g\left( t\right) \Gamma
\left( \alpha \right) \right] ^{k}}{\Gamma \left( \alpha k\right) }\tau
^{\rho -1}\left( t^{\rho }-\tau ^{\rho }\right) ^{k\alpha -1}v\left( \tau
\right) d\tau ,
\end{equation*}
$\forall t\in \left[ a,b\right]$. We suggest \cite{GRON1,GRON2,GRON3}, where other formulations of the Gronwall inequality can be found via fractional integrals.

Let $\alpha>0$, $n\in\mathbb{N}$, with $I=[a,b]$ $(-\infty\leq a<x<b\leq \infty)$ a finite or infinite interval, $f$ an integrable function defined on $I$ and $\psi\in C^{1}([a,b],\mathbb{R})$ an increasing function such that $\psi'(x)\neq 0$, $\forall x\in I$. The fractional integrals of a function $f$ with respect to another function are given by \cite{AHMJ,SAM}
\begin{equation}\label{P1}
I_{a+}^{\alpha ;\psi }f\left( x\right) :=\frac{1}{\Gamma \left( \alpha
\right) }\int_{a}^{x}\psi ^{\prime }\left( t\right) \left( \psi \left(
x\right) -\psi \left( t\right) \right) ^{\alpha -1}f\left( t\right) dt
\end{equation}
and
\begin{equation}\label{P2}
I_{b-}^{\alpha ;\psi }f\left( x\right) :=\frac{1}{\Gamma \left( \alpha
\right) }\int_{x}^{b}\psi ^{\prime }\left( t\right) \left( \psi \left(
t\right) -\psi \left( x\right) \right) ^{\alpha -1}f\left( t\right) dt.
\end{equation}

In this sense, from Eq.(\ref{P1}) and Eq.(\ref{P2}), the Riemann-Liouville fractional derivatives of a function $f$ of order $\alpha$ with respect to another function $\psi$, are defined by \cite{AHMJ,SAM}
\begin{eqnarray}\label{D2}
\mathcal{D}_{a+}^{\alpha ;\psi }f\left( x\right) &=&\left( \frac{1}{\psi ^{\prime
}\left( x\right) }\frac{d}{dx}\right) ^{n}I_{a+}^{n-\alpha ;\psi }f\left(
x\right)  \notag \\
&=&\frac{1}{\Gamma \left( n-\alpha \right) }\left( \frac{1}{\psi ^{\prime
}\left( x\right) }\frac{d}{dx}\right) ^{n}\int_{a}^{x}\psi ^{\prime }\left(
t\right) \left( \psi \left( x\right) -\psi \left( t\right) \right)
^{n-\alpha -1}f\left( t\right) dt \notag \\
\end{eqnarray}
and
\begin{eqnarray}\label{D3}
\mathcal{D}_{b-}^{\alpha ;\psi }f\left( x\right) &=&\left( -\frac{1}{\psi ^{\prime
}\left( x\right) }\frac{d}{dx}\right) ^{n}I_{a+}^{n-\alpha ;\psi }f\left(
x\right)  \notag \\
&=&\frac{1}{\Gamma \left( n-\alpha \right) }\left( -\frac{1}{\psi ^{\prime
}\left( x\right) }\frac{d}{dx}\right) ^{n}\int_{x}^{b}\psi ^{\prime }\left(
t\right) \left( \psi \left( t\right) -\psi \left( x\right) \right)
^{n-\alpha -1}f\left( t\right) dt,\notag \\
\end{eqnarray}
respectively and $n=[\alpha]+1$.

\begin{remark}  From {\rm Eq.(\ref{P1})} and {\rm Eq.(\ref{P2})}, we also consider a fractional derivative version, the so-called $\psi$-Caputo fractional derivative with respect to another function, recently introduced by Almeida {\rm \cite{RCA}}.
\end{remark}

\begin{definition}{\rm \cite{JEM1}} Let $n-1<\alpha <n$ with $n\in\mathbb{N}$, $I=[a,b]$ an interval such that $-\infty\leq a<b\leq\infty$ and $f,\psi\in C^{n}([a,b],\mathbb{R})$ two functions such that $\psi$ is increasing and $\psi'(x)\neq 0$, for all $x\in I$. The  $\psi$-Hilfer fractional derivatives $(\mbox{left-sided and right-sided})$ $^{H}\mathbb{D}^{\alpha,\beta;\psi}_{a+}(\cdot)$ and $^{H}\mathbb{D}^{\alpha,\beta;\psi}_{b-}(\cdot)$ of a function of order $\alpha$ and type $0\leq \beta \leq 1$, are defined by
\begin{equation}\label{HIL}
^{H}\mathbb{D}_{a+}^{\alpha ,\beta ;\psi }f\left( x\right) =I_{a+}^{\beta \left(
n-\alpha \right) ;\psi }\left( \frac{1}{\psi ^{\prime }\left( x\right) }\frac{d}{dx}\right) ^{n}I_{a+}^{\left( 1-\beta \right) \left( n-\alpha
\right) ;\psi }f\left( x\right)
\end{equation}
and
\begin{equation}\label{HIL1}
^{H}\mathbb{D}_{b-}^{\alpha ,\beta ;\psi }f\left( x\right) =I_{b-}^{\beta
\left( n-\alpha \right) ;\psi }\left( -\frac{1}{\psi ^{\prime }\left(
x\right) }\frac{d}{dx}\right) ^{n}I_{b-}^{\left( 1-\beta \right) \left(
n-\alpha \right) ;\psi }f\left( x\right).
\end{equation}

The $\psi$-Hilfer fractional derivatives as above defined, can be written in the following forms
\begin{equation}\label{HIL2}
^{H}\mathbb{D}_{a+}^{\alpha ,\beta ;\psi }f\left( x\right) =I_{a+}^{\gamma -\alpha ;\psi }\mathcal{D}_{a+}^{\gamma ;\psi }f\left( x\right) 
\end{equation}
and
\begin{equation}\label{HIL3}
^{H}\mathbb{D}_{b-}^{\alpha ,\beta ;\psi }f\left( x\right) =I_{b-}^{\gamma -\alpha ;\psi }\left( -1\right) ^{n}\mathcal{D}_{b-}^{\gamma ;\psi }f\left( x\right),
\end{equation}
respectively with $\gamma =\alpha +\beta \left( n-\alpha \right) $ and $I^{\gamma-\alpha;\psi}_{a+}(\cdot)$, $D^{\gamma;\psi}_{a+}(\cdot)$, $I^{\gamma-\alpha;\psi}_{b-}(\cdot)$, $D^{\gamma;\psi}_{b-}(\cdot)$ as defined in {\rm Eq.(\ref{P1})}, {\rm Eq.(\ref{D2})}, {\rm Eq.(\ref{P2})} and {\rm Eq.(\ref{D3})}.
\end{definition}

\begin{definition}{\rm \cite{PRIN}} Assume that $f\left( x,y\left( x\right) \right) $ is defined on set $\left( a,b\right] \times H,$ $H\subset  \mathbb{R} .$ A function $f\left( x,y\left( x\right) \right) $ satisfies Lipschitz condition with respect to $y$, if all $x\in \left( a,b\right] $ and for $y_{1},y_{2}\in H,$
\begin{equation}\label{eq10}
\left\vert f\left( x,y_{1}\right) -f\left( x,y_{2}\right) \right\vert \leq
A\left\vert y_{1}-y_{2}\right\vert,
\end{equation}
where $A>0$ is Lipschitz constant.
\end{definition}

\begin{definition} Let $0<\alpha <1,$ $0\leq \beta \leq 1,$ the weighted space $C_{1-\gamma ;\psi }^{\alpha ,\beta }\left[ a,b\right] $ is defined by 
\begin{equation*}
C_{1-\gamma ;\psi }^{\alpha ,\beta }\left[ a,b\right] =\left\{ f\in C_{1-\gamma ;\psi }\left[ a,b\right] :\text{ }^{H}\mathbb{D}_{a+}^{\alpha ,\beta ;\psi }f\in C_{1-\gamma ;\psi }\left[ a,b\right] \right\},
\end{equation*}
with $\gamma =\alpha +\beta \left( 1-\alpha \right)$.
\end{definition}

\begin{lemma}\label{lema1} If $\alpha >0$ and $0\leq \mu <1,$ then $I_{a+}^{\alpha ;\psi }(\cdot)$ is bounded from $C_{\mu ;\psi }\left[ a,b\right] $ to $C_{\mu ;\psi }\left[ a,b\right] .$ In addition, if $\mu \leq \alpha $, then $I_{a+}^{\alpha ;\psi }(\cdot)$ is bounded from $C_{\mu ;\psi }\left[ a,b\right] $ to $C\left[ a,b\right] $.
\end{lemma}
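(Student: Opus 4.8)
The plan is to estimate the weighted norm of $I_{a+}^{\alpha;\psi}f$ directly from the integral representation \eqref{P1}. Fix $f\in C_{\mu;\psi}[a,b]$ and write $M=\|f\|_{C_{\mu;\psi}[a,b]}$, so that $|f(t)|\le M(\psi(t)-\psi(a))^{-\mu}$ for all $t\in(a,b]$. Then
\begin{equation*}
\bigl|(\psi(x)-\psi(a))^{\mu}I_{a+}^{\alpha;\psi}f(x)\bigr|
\le \frac{M(\psi(x)-\psi(a))^{\mu}}{\Gamma(\alpha)}\int_{a}^{x}\psi'(t)\,(\psi(x)-\psi(t))^{\alpha-1}(\psi(t)-\psi(a))^{-\mu}\,dt .
\end{equation*}
The substitution $u=\psi(t)$ (valid since $\psi\in C^1$ is increasing with $\psi'\neq 0$) turns the integral into $\int_{\psi(a)}^{\psi(x)}(\psi(x)-u)^{\alpha-1}(u-\psi(a))^{-\mu}\,du$, which after the further rescaling $u=\psi(a)+s(\psi(x)-\psi(a))$ becomes $(\psi(x)-\psi(a))^{\alpha-\mu}B(\alpha,1-\mu)$, where $B(\cdot,\cdot)$ is the Beta function; here $1-\mu>0$ and $\alpha>0$ guarantee convergence.

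Putting this together yields
\begin{equation*}
\bigl|(\psi(x)-\psi(a))^{\mu}I_{a+}^{\alpha;\psi}f(x)\bigr|
\le \frac{B(\alpha,1-\mu)}{\Gamma(\alpha)}\,(\psi(x)-\psi(a))^{\alpha}\,M
=\frac{\Gamma(1-\mu)}{\Gamma(\alpha+1-\mu)}\,(\psi(x)-\psi(a))^{\alpha}\,M .
\end{equation*}
Since $\psi$ is continuous on $[a,b]$, the factor $(\psi(x)-\psi(a))^{\alpha}$ is bounded by $(\psi(b)-\psi(a))^{\alpha}$, and $(\psi(x)-\psi(a))^{\mu}I_{a+}^{\alpha;\psi}f(x)$ extends continuously to $x=a$ with value $0$; continuity on $(a,b]$ follows from dominated convergence applied to the integral (or directly from the explicit Beta-function evaluation). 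Hence $I_{a+}^{\alpha;\psi}f\in C_{\mu;\psi}[a,b]$ with
\begin{equation*}
\|I_{a+}^{\alpha;\psi}f\|_{C_{\mu;\psi}[a,b]}\le \frac{\Gamma(1-\mu)}{\Gamma(\alpha+1-\mu)}\,(\psi(b)-\psi(a))^{\alpha}\,\|f\|_{C_{\mu;\psi}[a,b]},
\end{equation*}
proving boundedness from $C_{\mu;\psi}[a,b]$ into itself. For the second assertion, when $\mu\le\alpha$ the power $(\psi(x)-\psi(a))^{\alpha-\mu}$ has nonnegative exponent, so multiplying the displayed bound by $(\psi(x)-\psi(a))^{-\mu}$ still leaves $(\psi(x)-\psi(a))^{\alpha-\mu}$, which is bounded on $[a,b]$; thus $I_{a+}^{\alpha;\psi}f$ itself (without weight) is bounded, i.e. $I_{a+}^{\alpha;\psi}$ maps $C_{\mu;\psi}[a,b]$ into $C[a,b]$, again with an explicit constant involving $(\psi(b)-\psi(a))^{\alpha-\mu}$.

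The only genuinely delicate point is justifying the change of variables and the continuity of $x\mapsto(\psi(x)-\psi(a))^{\mu}I_{a+}^{\alpha;\psi}f(x)$ up to the endpoint $x=a$, where the integrand is singular; this is handled by the explicit Beta-integral computation, which shows the weighted quantity is a constant multiple of $(\psi(x)-\psi(a))^{\alpha}\,\cdot$ (a continuous average of $f$), so no real obstacle remains beyond bookkeeping. If one wishes to avoid the exact Beta evaluation, the same bound can be obtained by crudely estimating $(\psi(t)-\psi(a))^{-\mu}\le(\psi(x)-\psi(t))^{-\mu}$ fails in general, so the substitution route is the clean one and is the approach I would follow.
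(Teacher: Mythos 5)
Your argument is correct, and it is worth noting that the paper itself offers no proof of this lemma at all: it is stated as a known fact (the $\psi$-analogue of the classical weighted-space boundedness result from the cited literature), with only the following lemma carrying a ``see \cite{AHMJ}'' proof. So your proposal supplies exactly what the paper omits. Your route is the standard one: the pointwise bound $|f(t)|\le \|f\|_{C_{\mu;\psi}}(\psi(t)-\psi(a))^{-\mu}$ followed by the substitution $u=\bigl(\psi(t)-\psi(a)\bigr)/\bigl(\psi(x)-\psi(a)\bigr)$, which reduces everything to the Beta integral; note that this computation is precisely Lemma \ref{lema2} with $\delta=1-\mu$, so you could shorten the argument by invoking that lemma directly, obtaining $\|I_{a+}^{\alpha;\psi}f\|_{C_{\mu;\psi}[a,b]}\le \frac{\Gamma(1-\mu)}{\Gamma(\alpha+1-\mu)}(\psi(b)-\psi(a))^{\alpha}\|f\|_{C_{\mu;\psi}[a,b]}$ and, for $\mu\le\alpha$, the unweighted bound with $(\psi(b)-\psi(a))^{\alpha-\mu}$. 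Two small points deserve tightening. First, for the second assertion the borderline case $\mu=\alpha$ is the only place where continuity of $I_{a+}^{\alpha;\psi}f$ at $x=a$ is not immediate from the decay of the bound; it does follow from your rescaled representation, since writing $f(t)=g(t)(\psi(t)-\psi(a))^{-\mu}$ with $g$ continuous gives $I_{a+}^{\alpha;\psi}f(x)=\frac{(\psi(x)-\psi(a))^{\alpha-\mu}}{\Gamma(\alpha)}\int_{0}^{1}(1-u)^{\alpha-1}u^{-\mu}\,g\bigl(\psi^{-1}(\psi(a)+u(\psi(x)-\psi(a)))\bigr)\,du$, and dominated convergence yields continuity on all of $[a,b]$ with limit $\frac{B(\alpha,1-\mu)}{\Gamma(\alpha)}g(a)$ at $x=a$ when $\mu=\alpha$; this should be stated explicitly rather than left to the phrase ``continuous average of $f$.'' Second, your closing sentence about avoiding the Beta evaluation is garbled (it asserts and then retracts a crude estimate in the same breath) and should simply be deleted; the substitution argument you actually use is complete without it.
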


\begin{lemma} \label{lema2} Let $\alpha>0$ and $\delta>0$.
\begin{enumerate}
\item If $f(x)= \left( \psi \left( x\right) -\psi \left( a\right)
\right) ^{\delta -1}$, then 
\begin{equation*}
I_{a+}^{\alpha ;\psi }f(x)=\frac{\Gamma \left( \delta \right) }{\Gamma \left(
\alpha +\delta \right) }\left( \psi \left( x\right) -\psi \left( a\right)
\right) ^{\alpha +\delta -1}
\end{equation*}

\item If $g(x)=\left( \psi \left( b\right) -\psi \left( x\right)
\right) ^{\delta -1}$, then
\begin{equation*}
I_{b-}^{\alpha ;\psi }g(x)=\frac{\Gamma \left( \delta \right) }{\Gamma\left(
\alpha +\delta \right) }\left( \psi \left( b\right) -\psi \left( x\right)
\right) ^{\alpha +\delta -1}
\end{equation*}

\end{enumerate}
\end{lemma}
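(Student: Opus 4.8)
The plan is to prove both parts by inserting the given power function directly into the definition of the $\psi$-fractional integral and reducing the resulting integral to a Beta function by a monotone change of variables. I will carry out part (1) in detail and indicate that part (2) follows by the mirror-image computation.

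For part (1), I would start from Eq.(\ref{P1}) with $f(t)=(\psi(t)-\psi(a))^{\delta-1}$, obtaining
\begin{equation*}
I_{a+}^{\alpha;\psi}f(x)=\frac{1}{\Gamma(\alpha)}\int_{a}^{x}\psi'(t)\bigl(\psi(x)-\psi(t)\bigr)^{\alpha-1}\bigl(\psi(t)-\psi(a)\bigr)^{\delta-1}\,dt .
\end{equation*}
Since $\psi\in C^{1}$ is increasing with $\psi'\neq 0$, the map $t\mapsto u:=\dfrac{\psi(t)-\psi(a)}{\psi(x)-\psi(a)}$ is a valid $C^1$ change of variable on $[a,x]$, with $du=\dfrac{\psi'(t)}{\psi(x)-\psi(a)}\,dt$, $u(a)=0$, $u(x)=1$, and the identities $\psi(t)-\psi(a)=(\psi(x)-\psi(a))u$ and $\psi(x)-\psi(t)=(\psi(x)-\psi(a))(1-u)$. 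Substituting and collecting the powers of $\psi(x)-\psi(a)$ gives
\begin{equation*}
I_{a+}^{\alpha;\psi}f(x)=\frac{\bigl(\psi(x)-\psi(a)\bigr)^{\alpha+\delta-1}}{\Gamma(\alpha)}\int_{0}^{1}(1-u)^{\alpha-1}u^{\delta-1}\,du .
\end{equation*}
The integral is the Beta function $B(\delta,\alpha)=\Gamma(\delta)\Gamma(\alpha)/\Gamma(\alpha+\delta)$, which converges precisely because $\alpha>0$ and $\delta>0$; substituting this value and cancelling $\Gamma(\alpha)$ yields the claimed formula.

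For part (2), I would use Eq.(\ref{P2}) with $g(t)=(\psi(b)-\psi(t))^{\delta-1}$ and the analogous substitution $u:=\dfrac{\psi(b)-\psi(t)}{\psi(b)-\psi(x)}$ on $[x,b]$, again monotone by the hypotheses on $\psi$; the same Beta-integral identity closes the argument. I do not anticipate a genuine obstacle here: the only points needing care are that the change of variables is legitimate (guaranteed by $\psi$ increasing and $\psi'\neq 0$) and that the endpoint singularities of the integrand are integrable (guaranteed by $\alpha,\delta>0$), so the proof is essentially a routine reduction to the Euler Beta integral.
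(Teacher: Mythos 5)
Your computation is correct: substituting the power function into the definition and using the change of variable $u=\bigl(\psi(t)-\psi(a)\bigr)/\bigl(\psi(x)-\psi(a)\bigr)$ reduces the integral to the Euler Beta function, which gives exactly the stated formula, and the mirror substitution handles part (2). The paper itself offers no argument here (it simply defers to the cited reference), and your Beta-integral reduction is precisely the standard proof underlying that reference, so there is nothing further to reconcile.
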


\begin{proof}
See {\rm \cite{AHMJ}}.
\end{proof}

\begin{theorem}\label{teo1} If $f\in C^{n}[a,b]$, $n-1<\alpha<n$ and $0\leq \beta \leq 1$, then
\begin{equation*}
I_{a+}^{\alpha ;\psi }\text{ }^{H}\mathbb{D}_{a+}^{\alpha ,\beta ;\psi }f\left( x\right) =f\left( x\right) -\overset{n}{\underset{k=1}{\sum }}\frac{\left( \psi \left( x\right) -\psi \left( a\right) \right) ^{\gamma -k}}{\Gamma \left( \gamma -k+1\right) }f_{\psi }^{\left[ n-k\right] }I_{a+}^{\left( 1-\beta \right) \left( n-\alpha \right) ;\psi }f\left( a\right) 
\end{equation*}
and 
\begin{equation*}
I_{b-}^{\alpha ;\psi }\text{ }^{H}\mathbb{D}_{b-}^{\alpha ,\beta ;\psi }f\left( x\right) =f\left( x\right) -\overset{n}{\underset{k=1}{\sum }}\frac{\left( -1\right) ^{k}\left( \psi \left( b\right) -\psi \left( x\right) \right) ^{\gamma -k}}{\Gamma \left( \gamma -k+1\right) }f_{\psi }^{\left[ n-k\right] }I_{b-}^{\left( 1-\beta \right) \left( n-\alpha \right) ;\psi }f\left(
b\right) .
\end{equation*}
\end{theorem}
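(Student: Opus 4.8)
The plan is to reduce the two identities to the composition (``fundamental theorem'') formula for the $\psi$-Riemann--Liouville operators, using the representations \eqref{HIL2}--\eqref{HIL3} of the $\psi$-Hilfer derivative together with the semigroup property of the fractional integral with respect to $\psi$. Throughout put $D_{\psi}:=\frac{1}{\psi'(x)}\frac{d}{dx}$, recall $\gamma=\alpha+\beta(n-\alpha)$, so that $\gamma-\alpha=\beta(n-\alpha)\ge0$ and, crucially, $(1-\beta)(n-\alpha)=n-\gamma$; hence the integral $I_{a+}^{(1-\beta)(n-\alpha);\psi}f$ occurring in the statement is precisely $I_{a+}^{\,n-\gamma;\psi}f$, and $\mathcal D_{a+}^{\gamma;\psi}f=D_{\psi}^{\,n}I_{a+}^{\,n-\gamma;\psi}f$ by \eqref{D2} (the integer attached to $\mathcal D^{\gamma}$ is again $n$, since $n-1<\gamma\le n$).

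\textbf{Step 1 (reduction).} From \eqref{HIL2}, $^{H}\mathbb D_{a+}^{\alpha,\beta;\psi}f=I_{a+}^{\,\gamma-\alpha;\psi}\mathcal D_{a+}^{\,\gamma;\psi}f$. I would apply $I_{a+}^{\,\alpha;\psi}$ to both sides and invoke the semigroup law $I_{a+}^{\,\alpha;\psi}I_{a+}^{\,\gamma-\alpha;\psi}=I_{a+}^{\,\gamma;\psi}$ (valid for $\alpha>0$, $\gamma-\alpha\ge0$ and integrable $f$, and reducing to the identity when $\beta=0$; standard, cf. \cite{AHMJ,SAM}), obtaining
\[
I_{a+}^{\,\alpha;\psi}\,{}^{H}\mathbb D_{a+}^{\,\alpha,\beta;\psi}f(x)=I_{a+}^{\,\gamma;\psi}\mathcal D_{a+}^{\,\gamma;\psi}f(x),
\]
and the same computation from \eqref{HIL3} reduces the right-sided statement to $I_{b-}^{\,\gamma;\psi}\mathcal D_{b-}^{\,\gamma;\psi}f$ up to the sign factors recorded there.

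\textbf{Step 2 (the $\psi$-Riemann--Liouville composition formula).} It remains to prove, for $n-1<\gamma\le n$ and $f\in C^{n}[a,b]$,
\[
I_{a+}^{\,\gamma;\psi}\mathcal D_{a+}^{\,\gamma;\psi}f(x)=f(x)-\sum_{k=1}^{n}\frac{(\psi(x)-\psi(a))^{\gamma-k}}{\Gamma(\gamma-k+1)}\Bigl(D_{\psi}^{\,n-k}I_{a+}^{\,n-\gamma;\psi}f\Bigr)(a).
\]
Writing $g:=I_{a+}^{\,n-\gamma;\psi}f$, so that $\mathcal D_{a+}^{\,\gamma;\psi}f=D_{\psi}^{\,n}g$, I would integrate by parts $n$ times. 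The elementary step is
\[
I_{a+}^{\,\sigma;\psi}(D_{\psi}h)(x)=I_{a+}^{\,\sigma-1;\psi}h(x)-\frac{(\psi(x)-\psi(a))^{\sigma-1}}{\Gamma(\sigma)}\,h(a)\qquad(\sigma>1),
\]
obtained by an ordinary integration by parts after cancelling $\psi'$, the boundary coefficient being read off from Lemma \ref{lema2}. Applying this with $\sigma=\gamma,\gamma-1,\dots,\gamma-n+2$ (all $>1$ because $\gamma>n-1$) peels off the summands $k=1,\dots,n-1$ and leaves $I_{a+}^{\,\gamma-n+1;\psi}(D_{\psi}g)$; since $n-\gamma\in[0,1)$ one has $D_{\psi}g=D_{\psi}I_{a+}^{\,n-\gamma;\psi}f=\mathcal D_{a+}^{\,\gamma-n+1;\psi}f$, and the classical first-order composition formula $I_{a+}^{\,\mu;\psi}\mathcal D_{a+}^{\,\mu;\psi}f(x)=f(x)-\frac{(\psi(x)-\psi(a))^{\mu-1}}{\Gamma(\mu)}\bigl(I_{a+}^{\,1-\mu;\psi}f\bigr)(a)$ with $\mu=\gamma-n+1$ supplies the leftover $f(x)$ together with the $k=n$ term. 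Replacing $n-\gamma$ by $(1-\beta)(n-\alpha)$ and identifying $D_{\psi}^{\,n-k}=f_{\psi}^{[n-k]}$ gives the first displayed identity of the theorem; the second follows along the same lines from \eqref{HIL3}, the various signs (the $(-1)^{n}$ there, the $(-1)^{n}$ in the definition \eqref{D3} of $\mathcal D_{b-}^{\gamma;\psi}$, and the sign produced at each of the $n$ integrations by parts for the $b-$ operator) combining into the factor $(-1)^{k}$ in the boundary terms. Alternatively, all of Step 2 can be obtained at once from the substitution $u=\psi(t)$, which turns $I_{a+}^{\,\gamma;\psi}$ and $\mathcal D_{a+}^{\,\gamma;\psi}$ into the ordinary Riemann--Liouville operators $I_{\psi(a)+}^{\,\gamma}$, $D_{\psi(a)+}^{\,\gamma}$ acting on $f\circ\psi^{-1}$ (note $D_{\psi}$ becomes $d/du$), where the composition formula is classical.

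\textbf{Main obstacle.} The delicate point is the legitimacy of the repeated integration by parts near the lower endpoint (resp. near $b$): one must know that the boundary values $\bigl(D_{\psi}^{\,n-k}I_{a+}^{\,n-\gamma;\psi}f\bigr)(a)$ exist and are finite and that $\mathcal D_{a+}^{\,n-\gamma;\psi}I_{a+}^{\,n-\gamma;\psi}f=f$. This is exactly where the hypothesis $f\in C^{n}[a,b]$ is used, together with $n-\gamma=(1-\beta)(n-\alpha)\ge0$, which guarantees (via Lemma \ref{lema1}) enough regularity of $g=I_{a+}^{\,n-\gamma;\psi}f$; under the $\psi$-substitution these reduce to standard facts about the Riemann--Liouville operators on $C^{n}$, so no genuinely new analytic difficulty arises beyond careful bookkeeping of the endpoint terms.
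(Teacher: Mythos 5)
Your route has no internal counterpart to compare against: the paper's ``proof'' of Theorem \ref{teo1} is just the citation ``See \cite{JEM1}''. What you propose --- rewriting $^{H}\mathbb{D}_{a+}^{\alpha,\beta;\psi}$ via Eq.~(\ref{HIL2}), using the semigroup law $I_{a+}^{\alpha;\psi}I_{a+}^{\gamma-\alpha;\psi}=I_{a+}^{\gamma;\psi}$ to reduce everything to $I_{a+}^{\gamma;\psi}\mathcal{D}_{a+}^{\gamma;\psi}f$, and then establishing the $\psi$-Riemann--Liouville composition formula by $n$ integrations by parts (or, equivalently, by the substitution $u=\psi(t)$, which transports the problem to the classical Riemann--Liouville setting) --- is the standard derivation and is, in substance, the argument of the cited source. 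The key bookkeeping is right: the observation $(1-\beta)(n-\alpha)=n-\gamma$, the elementary step $I_{a+}^{\sigma;\psi}\bigl(\tfrac{1}{\psi'}\tfrac{d}{dx}h\bigr)(x)=I_{a+}^{\sigma-1;\psi}h(x)-\tfrac{(\psi(x)-\psi(a))^{\sigma-1}}{\Gamma(\sigma)}h(a)$ for $\sigma>1$, and the way the $k=1,\dots,n-1$ boundary terms plus the first-order composition formula with $\mu=\gamma-n+1$ reproduce exactly the sum in the statement, including the $(-1)^{k}$ signs on the right-sided version.

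One caveat concerns your closing claim that no genuine analytic difficulty remains. For $n\geq 2$ the hypothesis $f\in C^{n}[a,b]$ does not by itself make the boundary data finite: with $g=I_{a+}^{\,n-\gamma;\psi}f$ one has, near $a$, $\bigl(\tfrac{1}{\psi'}\tfrac{d}{dx}\bigr)^{n-1}g(x)=\tfrac{f(a)}{\Gamma(2-\gamma)}(\psi(x)-\psi(a))^{1-\gamma}+\cdots$, which diverges as $x\to a$ whenever $\gamma>1$ and $f(a)\neq 0$; correspondingly $\mathcal{D}_{a+}^{\gamma;\psi}f$ behaves like $(\psi(x)-\psi(a))^{-\gamma}$ and need not be integrable near $a$, so both the semigroup step and the repeated integration by parts require either vanishing conditions at $a$ or hypotheses of the type ``$I_{a+}^{\,n-\gamma;\psi}f$ absolutely continuous of order $n$'' (as in the classical Riemann--Liouville analogue in \cite{AHMJ}), rather than Lemma \ref{lema1}, which only gives boundedness in weighted $C$-norms and says nothing about $n-1$ further differentiations. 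This defect is inherited from the statement as it circulates in the literature, not introduced by you, and it is immaterial for the only case the present paper uses, namely $0<\alpha<1$, $n=1$, where your argument is complete: there the scheme collapses to the single first-order composition formula, and the right-sided identity follows with the sign bookkeeping you describe.
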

\begin{proof}
See {\rm \cite{JEM1}}.
\end{proof}

\begin{theorem}\label{teo2} Let $f\in C^{1}[a,b]$, $\alpha>0$ and $0\leq\beta\leq 1$, we have
\begin{equation*}
^{H}\mathbb{D}_{a+}^{\alpha ,\beta ;\psi }I_{a+}^{\alpha ;\psi }f\left( x\right)
=f\left( x\right) \text{ and }^{H}\mathbb{D}_{b-}^{\alpha ,\beta ;\psi
}I_{b-}^{\alpha ;\psi }f\left( x\right) =f\left( x\right) .
\end{equation*}
\end{theorem}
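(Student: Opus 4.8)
The plan is to peel both operators down to their definitions and reduce everything to two ingredients: the semigroup property of the $\psi$-fractional integral, $I_{a+}^{\mu;\psi}I_{a+}^{\nu;\psi}g=I_{a+}^{\mu+\nu;\psi}g$ for $\mu,\nu>0$ (a standard property of \eqref{P1}; see \cite{AHMJ,SAM}), and the composition formula of Theorem~\ref{teo1}. I would carry it out for $n=1$ (so $0<\alpha<1$), the general case being word-for-word the same. Throughout put $\gamma=\alpha+\beta(1-\alpha)$ and $\mu:=\gamma-\alpha=\beta(1-\alpha)\in[0,1)$.

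First I would unfold the left-sided operator from \eqref{HIL}:
\[
{}^{H}\mathbb{D}_{a+}^{\alpha,\beta;\psi}I_{a+}^{\alpha;\psi}f(x)
=I_{a+}^{\beta(1-\alpha);\psi}\Bigl(\tfrac{1}{\psi'(x)}\tfrac{d}{dx}\Bigr)I_{a+}^{(1-\beta)(1-\alpha);\psi}\,I_{a+}^{\alpha;\psi}f(x).
\]
The semigroup property applied to the inner pair gives $I_{a+}^{(1-\beta)(1-\alpha);\psi}I_{a+}^{\alpha;\psi}f=I_{a+}^{(1-\beta)(1-\alpha)+\alpha;\psi}f=I_{a+}^{1-\mu;\psi}f$ (the exponent is just $1-\beta(1-\alpha)=1-\mu$). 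Hence the right-hand side collapses to $I_{a+}^{\mu;\psi}\bigl(\tfrac{1}{\psi'}\tfrac{d}{dx}\bigr)I_{a+}^{1-\mu;\psi}f$, which by \eqref{D2} with $n=1$ is exactly $I_{a+}^{\mu;\psi}\,\mathcal{D}_{a+}^{\mu;\psi}f$. (Equivalently, one may reach the same point via \eqref{HIL2} after checking $\mathcal{D}_{a+}^{\gamma;\psi}I_{a+}^{\alpha;\psi}f=\mathcal{D}_{a+}^{\mu;\psi}f$ by the same exponent bookkeeping.)

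It then remains to prove $I_{a+}^{\mu;\psi}\mathcal{D}_{a+}^{\mu;\psi}f=f$. For $\mu=0$ (i.e. $\beta=0$) this is immediate, since $\bigl(\tfrac{1}{\psi'}\tfrac{d}{dx}\bigr)I_{a+}^{1;\psi}f=f$. For $\mu\in(0,1)$ I would note that $\mathcal{D}_{a+}^{\mu;\psi}={}^{H}\mathbb{D}_{a+}^{\mu,0;\psi}$ (set $\beta=0$ in \eqref{HIL}) and apply Theorem~\ref{teo1} with order $\mu$, type $0$, $n=1$ and $\gamma$ there equal to $\mu$, obtaining
\[
I_{a+}^{\mu;\psi}\mathcal{D}_{a+}^{\mu;\psi}f(x)=f(x)-\frac{(\psi(x)-\psi(a))^{\mu-1}}{\Gamma(\mu)}\bigl(I_{a+}^{1-\mu;\psi}f\bigr)(a).
\]
The boundary term vanishes: since $f\in C^{1}[a,b]\subset C[a,b]$, the elementary bound $|I_{a+}^{1-\mu;\psi}f(x)|\le \|f\|_{C[a,b]}\,(\psi(x)-\psi(a))^{1-\mu}/\Gamma(2-\mu)$ (a direct substitution $u=\psi(x)-\psi(t)$, consistent with Lemmas~\ref{lema1} and~\ref{lema2}) forces $\bigl(I_{a+}^{1-\mu;\psi}f\bigr)(a)=0$. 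Therefore ${}^{H}\mathbb{D}_{a+}^{\alpha,\beta;\psi}I_{a+}^{\alpha;\psi}f=f$. The right-sided identity is proved identically: unfold \eqref{HIL1}, use the semigroup property and \eqref{D3}, then apply the second formula of Theorem~\ref{teo1} and the vanishing of $\bigl(I_{b-}^{1-\mu;\psi}f\bigr)(b)$.

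The only genuinely delicate points are the justification that differentiating the composition $I_{a+}^{1-\mu;\psi}I_{a+}^{\alpha;\psi}f$ (equivalently $\bigl(\tfrac1{\psi'}\tfrac{d}{dx}\bigr)I_{a+}^{1-\mu;\psi}f$) is legitimate, and the vanishing of the boundary term. The first is taken care of by the hypothesis $f\in C^{1}[a,b]$ together with the boundedness in Lemma~\ref{lema1}, which keeps every intermediate object continuous and well defined; the second is the short estimate above. All the rest — the semigroup identity and the arithmetic of the exponents $(1-\beta)(1-\alpha)+\alpha=1-\mu$ — is routine.
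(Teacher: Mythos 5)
Your proof is correct, but there is nothing in the paper to compare it against: the authors give no argument for Theorem~\ref{teo2} and simply point to \cite{JEM1}, where the identity is established. Your route --- unfold \eqref{HIL}, merge $I_{a+}^{(1-\beta)(1-\alpha);\psi}I_{a+}^{\alpha;\psi}f$ into $I_{a+}^{1-\mu;\psi}f$ with $\mu=\beta(1-\alpha)$ by the semigroup property, recognize $I_{a+}^{\mu;\psi}\mathcal{D}_{a+}^{\mu;\psi}f$ via \eqref{D2}, and then kill the single boundary term produced by Theorem~\ref{teo1} (order $\mu$, type $0$, so $\gamma=\mu$) using the estimate $\bigl|I_{a+}^{1-\mu;\psi}f(x)\bigr|\le \Vert f\Vert_{C[a,b]}\,(\psi(x)-\psi(a))^{1-\mu}/\Gamma(2-\mu)$ --- is the natural one and is essentially the computation carried out in the source. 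Two remarks. First, leaning on Theorem~\ref{teo1} is legitimate and not circular here, since that theorem is proved independently in \cite{JEM1}; if you want an argument not using it, integrate by parts to write $\mathcal{D}_{a+}^{\mu;\psi}f(x)=\frac{f(a)\,(\psi(x)-\psi(a))^{-\mu}}{\Gamma(1-\mu)}+\frac{1}{\Gamma(1-\mu)}\int_{a}^{x}(\psi(x)-\psi(t))^{-\mu}f^{\prime}(t)\,dt$, apply $I_{a+}^{\mu;\psi}$, and use Lemma~\ref{lema2} plus the semigroup property to arrive at $f(a)+\int_{a}^{x}f^{\prime}(t)\,dt=f(x)$. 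Second, the case $\alpha>1$ is not quite ``word-for-word'': after the semigroup step you are left with $\bigl(\frac{1}{\psi^{\prime}(x)}\frac{d}{dx}\bigr)^{n}I_{a+}^{n-\mu;\psi}f$, and identifying this with $\mathcal{D}_{a+}^{\mu;\psi}f$ requires the additional (standard, but worth stating) rule $\bigl(\frac{1}{\psi^{\prime}(x)}\frac{d}{dx}\bigr)I_{a+}^{\sigma;\psi}g=I_{a+}^{\sigma-1;\psi}g$ for $\sigma>1$; with that, the general case and the right-sided identity go through exactly as you indicate.
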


\begin{proof}
See {\rm \cite{JEM1}}.
\end{proof}

\section{The Gronwall inequality}

The Gronwall inequality plays an important role in the study of the qualitative theory of integral and differential equations \cite{PRIN,GRON,WAN,DANI}, as well as solving Cauchy-type problems of non-linear differential equations. In order to work with continuous dependence of differential equations via $\psi$-Hilfer fractional derivative, in this section, we present the first main result, the generalized Gronwall inequality by means of the fractional integral with respect to another function $\psi$ and other important results.

\begin{theorem}\label{teo3} Let $u,$ $v$ be two integrable functions and $g$ continuous, with domain $\left[ a,b\right] .$ Let $\psi \in C^{1}\left[ a,b\right] $ an increasing function such that $\psi ^{\prime }\left( t\right) \neq 0$, $\forall t\in \left[ a,b\right]$. Assume that
\begin{enumerate}
\item $u$ and $v$ are nonnegative;
\item $g$ in nonnegative and nondecreasing.
\end{enumerate}

If
\begin{equation*}
u\left( t\right) \leq v\left( t\right) +g\left( t\right) \int_{a}^{t}\psi
^{\prime }\left( \tau \right) \left( \psi \left( t\right) -\psi \left( \tau
\right) \right) ^{\alpha -1}u\left( \tau \right) d\tau,
\end{equation*}
then
\begin{equation}\label{jose}
u\left( t\right) \leq v\left( t\right) +\int_{a}^{t}\overset{\infty }{%
\underset{k=1}{\sum }}\frac{\left[ g\left( t\right) \Gamma \left( \alpha
\right) \right] ^{k}}{\Gamma \left( \alpha k\right) }\psi ^{\prime }\left(
\tau \right) \left[ \psi \left( t\right) -\psi \left( \tau \right) \right]
^{\alpha k-1}v\left( \tau \right) d\tau,
\end{equation}
$\forall t\in \left[ a,b\right]$.
\end{theorem}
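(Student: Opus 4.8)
The plan is to iterate the integral inequality. For a nonnegative integrable function $\phi$ on $[a,b]$, define the operator
\[
B\phi(t) = g(t)\int_a^t \psi'(\tau)\bigl(\psi(t)-\psi(\tau)\bigr)^{\alpha-1}\phi(\tau)\,d\tau .
\]
Since $g\ge 0$ and $\psi$ is increasing, $B$ is order-preserving on nonnegative functions, and the hypothesis reads $u\le v+Bu$ pointwise on $[a,b]$. Applying this $n$ times and using the monotonicity of $B$ gives
\[
u(t)\le \sum_{k=0}^{n-1}B^k v(t)+B^n u(t),\qquad n\in\mathbb{N},
\]
with the convention $B^0v=v$.

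The core of the argument is the claim that for every $k\ge 1$ and every nonnegative integrable $\phi$,
\[
B^k\phi(t)\le \int_a^t \frac{[g(t)\Gamma(\alpha)]^k}{\Gamma(\alpha k)}\,\psi'(\tau)\bigl(\psi(t)-\psi(\tau)\bigr)^{\alpha k-1}\phi(\tau)\,d\tau .
\]
I would prove this by induction on $k$. The case $k=1$ is the definition of $B$. For the step, substitute the $k$-th bound into $B^{k+1}\phi=B(B^k\phi)$, interchange the order of integration (Tonelli, legitimate since all integrands are nonnegative), and, using that $g$ is nondecreasing, replace $g(\tau)$ by $g(t)$ for $\tau\le t$ and pull out all factors of $g(t)$. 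The inner integral that remains is
\[
\int_\tau^t \psi'(s)\bigl(\psi(t)-\psi(s)\bigr)^{\alpha-1}\bigl(\psi(s)-\psi(\tau)\bigr)^{\alpha k-1}\,ds,
\]
which is precisely $I_{\tau+}^{\alpha;\psi}$ applied to the power $(\psi(s)-\psi(\tau))^{\alpha k-1}$ evaluated at $t$; by Lemma \ref{lema2} (item 1, with base point $\tau$ and $\delta=\alpha k$) it equals $\dfrac{\Gamma(\alpha)\Gamma(\alpha k)}{\Gamma(\alpha(k+1))}\bigl(\psi(t)-\psi(\tau)\bigr)^{\alpha(k+1)-1}$. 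Collecting the constants yields exactly the $(k+1)$-st bound. This Beta-type evaluation, together with the bookkeeping of the $g$-factors, is the one genuinely computational point and is the main obstacle; everything else is routine.

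Finally I would let $n\to\infty$. Put $M=\max_{[a,b]}g$, $K=\psi(b)-\psi(a)$, and $U=\int_a^b\psi'(\tau)u(\tau)\,d\tau<\infty$ (finite since $\psi'$ is continuous and $u$ integrable). For $n\ge 1/\alpha$ one has $\bigl(\psi(t)-\psi(\tau)\bigr)^{\alpha n-1}\le K^{\alpha n-1}$, so the $k=n$ bound gives
\[
0\le B^n u(t)\le \frac{[M\Gamma(\alpha)]^n K^{\alpha n-1}}{\Gamma(\alpha n)}\,U\xrightarrow[n\to\infty]{}0,
\]
because $\Gamma(\alpha n)$ grows faster than any geometric factor. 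Hence the remainder term vanishes, and the same Gamma asymptotics show that the series $\sum_{k\ge1}B^k v(t)$ converges. Passing to the limit in the iterated inequality and then bounding each $B^k v$ by the $k$-th estimate, with the interchange of summation and integration justified by monotone convergence, we obtain
\[
u(t)\le v(t)+\sum_{k=1}^{\infty}B^k v(t)\le v(t)+\int_a^t \sum_{k=1}^{\infty}\frac{[g(t)\Gamma(\alpha)]^k}{\Gamma(\alpha k)}\,\psi'(\tau)\bigl(\psi(t)-\psi(\tau)\bigr)^{\alpha k-1}v(\tau)\,d\tau
\]
for all $t\in[a,b]$, which is \eqref{jose}.
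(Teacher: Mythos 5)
Your proposal is correct and follows essentially the same route as the paper: the same iteration $u\le\sum_{k=0}^{n-1}A^{k}v+A^{n}u$, the same induction bound on the $k$-th iterate using $g(\tau)\le g(t)$, Dirichlet/Tonelli, and the Beta-type evaluation of $\int_{\tau}^{t}\psi'(s)\left(\psi(t)-\psi(s)\right)^{\alpha-1}\left(\psi(s)-\psi(\tau)\right)^{\alpha k-1}ds$ (which you obtain by citing Lemma \ref{lema2} with base point $\tau$, while the paper redoes the substitution explicitly), and the same Gamma-growth argument that the remainder $A^{n}u$ vanishes. Your tail estimate keeping the factors $K^{\alpha n-1}$ and $U$ is in fact slightly more careful than the paper's, but it is not a different proof.
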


\begin{proof}
Let
\begin{equation}  \label{A}
A\phi \left( t\right) =g\left( t\right) \int_{a}^{t}\psi ^{\prime }\left(
\tau \right) \left( \psi \left( t\right) -\psi \left( \tau \right) \right)
^{\alpha -1}\phi \left( \tau \right) d\tau,
\end{equation}
$\forall t\in \left[ a,b\right] ,$ for locally integral functions $\phi$. Then,
\begin{equation*}
u\left( t\right) \leq v\left( t\right) +Au\left( t\right).
\end{equation*}

Iterating, for $n\in \mathbb{N}$, we can write
\begin{equation*}
u\left( t\right) \leq \underset{k=0}{\overset{n-1}{\sum }}A^{k}v\left(
t\right) +A^{n}u\left( t\right).
\end{equation*}

The next step, is usually mathematical induction, that if $\phi $ is a nonnegative function, then
\begin{equation} \label{eq11}
A^{k}u\left( t\right) \leq \int_{a}^{t}\frac{\left[ g\left( t\right) \Gamma
\left( \alpha \right) \right] ^{k}}{\Gamma \left( \alpha k\right) }\psi
^{\prime }\left( \tau \right) \left[ \psi \left( t\right) -\psi \left( \tau
\right) \right] ^{\alpha k-1}u\left( \tau \right) d\tau.
\end{equation}

We know that relation {\rm Eq.(\ref{eq11})} is true for $n=1.$ Suppose that the formula is true for some $k=n\in \mathbb{N}$, then the induction hypothesis implies
\begin{eqnarray}\label{eq12}
A^{k+1}u\left( t\right) = &&A\left( A^{k}u\left( t\right) \right) \leq
A\left( \int_{a}^{t}\frac{\left[ g\left( t\right) \Gamma \left( \alpha
\right) \right] ^{k}}{\Gamma \left( \alpha k\right) }\psi ^{\prime }\left(
\tau \right) \left[ \psi \left( t\right) -\psi \left( \tau \right) \right]
^{\alpha k-1}u\left( \tau \right) d\tau \right)  \notag  \label{C} \\
= &&g\left( t\right) \int_{a}^{t}\psi ^{\prime }\left( \tau \right) \left[
\psi \left( t\right) -\psi \left( \tau \right) \right] ^{\alpha -1}\left(
\int_{a}^{\tau }\frac{\left[ g\left( \tau \right) \Gamma \left( \alpha
\right) \right] ^{k}}{\Gamma \left( \alpha k\right) }\psi ^{\prime }\left(
s\right) \left[ \psi \left( \tau \right) -\psi \left( s\right) \right]
^{\alpha k-1}u\left( s\right) ds\right) d\tau. \notag \\
\end{eqnarray}

By hypothesis, $g$ is a nondecreasing function, that is $g\left( \tau \right) \leq g\left( t\right) ,$ for all $\tau \leq t$, then from {\rm Eq.(\ref{eq12})}, we get
\begin{equation}  \label{eq13}
A^{k+1}u\left( t\right) \leq \frac{\Gamma \left( \alpha \right) ^{k}}{\Gamma
\left( \alpha k\right) }\left[ g\left( t\right) \right] ^{k+1}\int_{a}^{t}%
\int_{a}^{\tau }\psi ^{\prime }\left( \tau \right) \left[ \psi \left(
t\right) -\psi \left( \tau \right) \right] ^{\alpha -1}\psi ^{\prime }\left(
s\right) \left[ \psi \left( \tau \right) -\psi \left( s\right) \right]
^{\alpha k-1}u\left( s\right) dsd\tau.
\end{equation}

By Dirichlet's formula, the {\rm Eq.(\ref{eq13})} can be rewritten as
\begin{equation}\label{eq14}
A^{k+1}u\left( t\right) \leq \frac{\Gamma \left( \alpha \right) ^{k}}{\Gamma
\left( \alpha k\right) }\left[ g\left( t\right) \right] ^{k+1}\int_{a}^{t}%
\psi ^{\prime }\left( \tau \right) u\left( \tau \right) \int_{\tau }^{t}\psi
^{\prime }\left( s\right) \left[ \psi \left( t\right) -\psi \left( s\right) %
\right] ^{\alpha -1}\left[ \psi \left( s\right) -\psi \left( \tau \right) %
\right] ^{\alpha k-1}dsd\tau .
\end{equation}

Note that,
\begin{eqnarray*}
&&\int_{\tau }^{t}\psi ^{\prime }\left( s\right) \left[ \psi \left( t\right)
-\psi \left( s\right) \right] ^{\alpha -1}\left[ \psi \left( s\right) -\psi
\left( \tau \right) \right] ^{\alpha k-1}ds \\
&=&\int_{\tau }^{t}\psi ^{\prime }\left( s\right) \left[ \psi \left(
t\right) -\psi \left( s\right) \right] ^{\alpha -1}\left[ 1-\frac{\psi
\left( s\right) -\psi \left( \tau \right) }{\psi \left( t\right) -\psi
\left( \tau \right) }\right] ^{\alpha -1}\left[ \psi \left( s\right) -\psi
\left( \tau \right) \right] ^{\alpha k-1}ds.
\end{eqnarray*}

Introducing a change of variables $u=\dfrac{\psi \left( s\right) -\psi \left( \tau \right) }{\psi \left( t\right) -\psi \left( \tau \right) }$ and using the definition of beta function and the relation with gamma function $B\left( x,y\right) =\dfrac{\Gamma \left( x\right) \Gamma \left( y\right) }{\Gamma \left( x+y\right) }$, we have
\begin{eqnarray}\label{eq15}
&&\int_{\tau }^{t}\psi ^{\prime }\left( s\right) \left[ \psi \left( t\right)
-\psi \left( s\right) \right] ^{\alpha -1}\left[ \psi \left( s\right) -\psi
\left( \tau \right) \right] ^{\alpha k-1}ds  \notag \\
&=&\left[ \psi \left( t\right) -\psi \left( \tau \right) \right] ^{k\alpha
+\alpha -1}\int_{0}^{1}\left[ 1-u\right] ^{\alpha -1}u^{k\alpha -1}du  \notag
\\
&=&\left[ \psi \left( t\right) -\psi \left( \tau \right) \right] ^{k\alpha
+\alpha -1}\frac{\Gamma \left( \alpha \right) \Gamma \left( k\alpha \right) 
}{\Gamma \left( \alpha +k\alpha \right) }.
\end{eqnarray}

Replacing {\rm Eq.(\ref{eq14})} in {\rm Eq.(\ref{eq15})}, we get
\begin{equation*}
A^{k+1}u\left( t\right) \leq \int_{a}^{t}\frac{\left[ g\left( t\right)
\Gamma \left( \alpha \right) \right] ^{k+1}}{\Gamma \left( \alpha \left(
k+1\right) \right) }\psi ^{\prime }\left( \tau \right) u\left( \tau \right) %
\left[ \psi \left( t\right) -\psi \left( \tau \right) \right] ^{\alpha
\left( k+1\right) -1}d\tau.
\end{equation*}

Let us now prove that $A^{n}u\left( t\right) \rightarrow 0$ as $n\rightarrow \infty .$ As $g$ is a continuous function on $\left[ a,b\right] $, then by Weierstrass theorem {\rm \cite{COU,ELON}}, there exist a constant $M>0$ such that $g\left( t\right) \leq M$ for all $t\in \left[ a,b\right] $. Then, we obtain
\begin{equation*}
A^{n}u\left( t\right) \leq \int_{a}^{t}\frac{\left[ M\Gamma \left( \alpha
\right) \right] ^{n}}{\Gamma \left( \alpha n\right) }\psi ^{\prime }\left(
\tau \right) u\left( \tau \right) \left[ \psi \left( t\right) -\psi \left(
\tau \right) \right] ^{\alpha n-1}d\tau.
\end{equation*}

Consider the series
\begin{equation*}
\overset{\infty }{\underset{n=1}{\sum }}\frac{\left[ M\Gamma \left( \alpha
\right) \right] ^{n}}{\Gamma \left( \alpha n\right) },
\end{equation*}
satisfying the relation
\begin{equation}
\underset{n\rightarrow \infty }{\lim }\frac{\Gamma \left( \alpha n\right)
\left( \alpha n\right) ^{\alpha }}{\Gamma \left( \alpha n+\alpha \right) }=1.
\end{equation}

Using the ratio test to the series and the asymptotic approximation {\rm \cite{WONR}}, so we obtain
\begin{equation*}
\underset{n\rightarrow \infty }{\lim }\frac{\Gamma \left( \alpha n\right) }{%
\Gamma \left( \alpha n+\alpha \right) }=0.
\end{equation*}

Therefore, the series converges and we conclude that
\begin{equation*}
u\left( t\right) \leq \overset{\infty }{\underset{k=0}{\sum }}A^{k}v\left(
t\right) \leq v\left( t\right) +\int_{a}^{t}\underset{k=1}{\overset{\infty }{%
\sum }}\frac{\left[ g\left( t\right) \Gamma \left( \alpha \right) \right]
^{k}}{\Gamma \left( \alpha k\right) }\psi ^{\prime }\left( \tau \right) %
\left[ \psi \left( t\right) -\psi \left( \tau \right) \right] ^{\alpha
k-1}v\left( \tau \right) d\tau.
\end{equation*}
\end{proof}

\begin{corollary} Let $\alpha>0$, $I=[a,b]$ and $f,\psi\in C^{1}([a,b],\mathbb{R})$ two functions such that $\psi$ is increasing and $\psi'(t)\neq 0$ for all $t\in I$. Suppose $b\geq 0$ and $v$ is a nonnegative function locally integrable on $\left[ a,b\right] $ and suppose also that $u$ is nonnegative and locally integrable on $\left[ a,b\right] $ with 
\begin{equation*}
u\left( t\right) \leq v\left( t\right) +b\int_{a}^{t}\psi ^{\prime }\left(
\tau \right) \left[ \psi \left( t\right) -\psi \left( \tau \right) \right]
^{\alpha -1}u\left( \tau \right) d\tau, \text{ }\forall t\in[a,b].
\end{equation*}

Then, we can write
\begin{equation*}
u\left( t\right) \leq v\left( t\right) +\int_{a}^{t}\underset{k=1}{\overset{%
\infty }{\sum }}\frac{\left[ b\Gamma \left( \alpha \right) \right] ^{k}}{%
\Gamma \left( \alpha k\right) }\psi ^{\prime }\left( \tau \right) \left[
\psi \left( t\right) -\psi \left( \tau \right) \right] ^{\alpha k-1}v\left(
\tau \right) d\tau, \text{ }\forall t\in[a,b].
\end{equation*}
\end{corollary}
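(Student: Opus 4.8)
The plan is to recognize this corollary as the special case of Theorem \ref{teo3} in which the multiplier function $g$ is taken to be the constant function $g(t) \equiv b$. First I would verify that this choice meets the structural hypotheses of Theorem \ref{teo3}. Since $b \ge 0$, the constant function $g \equiv b$ is nonnegative; being constant, it is (weakly) nondecreasing on $[a,b]$; and it is trivially continuous. Thus the two numbered assumptions of Theorem \ref{teo3} — "$u$ and $v$ nonnegative" and "$g$ nonnegative and nondecreasing" — together with the continuity of $g$ and the local integrability of $u$ and $v$, are all satisfied under the hypotheses of the corollary.

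Second, with $g \equiv b$ the assumed inequality
\[
u(t) \le v(t) + b \int_a^t \psi'(\tau)\,[\psi(t)-\psi(\tau)]^{\alpha-1} u(\tau)\, d\tau, \qquad \forall t \in [a,b],
\]
is exactly the hypothesis of Theorem \ref{teo3}. Invoking that theorem then gives
\[
u(t) \le v(t) + \int_a^t \sum_{k=1}^{\infty} \frac{[g(t)\Gamma(\alpha)]^{k}}{\Gamma(\alpha k)}\, \psi'(\tau)\,[\psi(t)-\psi(\tau)]^{\alpha k - 1} v(\tau)\, d\tau,
\]
and substituting the constant value $g(t) = b$ into the summand yields the asserted bound
\[
u(t) \le v(t) + \int_a^t \sum_{k=1}^{\infty} \frac{[b\,\Gamma(\alpha)]^{k}}{\Gamma(\alpha k)}\, \psi'(\tau)\,[\psi(t)-\psi(\tau)]^{\alpha k - 1} v(\tau)\, d\tau, \qquad \forall t \in [a,b].
\]

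There is essentially no genuine obstacle here: the argument is a one-line specialization. The only point worth making explicit is that a constant function legitimately counts as "nondecreasing" in the sense used in the proof of Theorem \ref{teo3} — indeed that proof only exploits the monotonicity through the inequality $g(\tau) \le g(t)$ for $\tau \le t$, which holds with equality when $g$ is constant — and that the induction on $k$ establishing \eqref{eq11}, together with the convergence of the resulting series, carries over verbatim. If desired, one can also remark that the hypothesis $b \ge 0$ (rather than merely $b \in \mathbb{R}$) is precisely what guarantees the sign condition on $g$ needed for the inductive estimate to remain valid.
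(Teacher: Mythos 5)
Your proof is correct and is exactly the argument the paper intends: the corollary is the specialization of Theorem \ref{teo3} to the constant function $g(t)\equiv b$, which is continuous, nonnegative (since $b\geq 0$) and trivially nondecreasing, so the theorem applies verbatim (the paper itself omits the proof for precisely this reason). Your added remark that the proof of Theorem \ref{teo3} uses monotonicity only through $g(\tau)\leq g(t)$ is a correct and harmless clarification.
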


\begin{corollary} Under the hypothesis of {\rm Theorem \ref{teo3}}, let $v$ be a nondecreasing function on $\left[ a,b\right] $. Then, we have
\begin{equation*}
u\left( t\right) \leq v\left( t\right) \mathbb{E}_{\alpha }\left( g\left( t\right)
\Gamma \left( \alpha \right) \left[ \psi \left( t\right) -\psi \left( \tau
\right) \right] ^{\alpha }\right) ,\text{ }\forall t\in \left[ a,b\right],
\end{equation*}
where $\mathbb{E}_{\alpha }\left( \cdot \right) $ is the Mittag-Leffler function defined by $\mathbb{E}_{\alpha }\left( t\right) =\underset{k=0}{\overset{\infty }{\sum }}\dfrac{t^{k}}{\Gamma \left( \alpha k+1\right) }$ with $Re(\alpha)>0$.
\end{corollary}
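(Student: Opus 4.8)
The plan is to push the hypothesis ``$v$ nondecreasing'' directly into the conclusion of Theorem \ref{teo3} and then sum the resulting series in closed form. First I would start from the bound supplied by Theorem \ref{teo3},
\[
u(t) \leq v(t) + \int_a^t \sum_{k=1}^\infty \frac{[g(t)\Gamma(\alpha)]^k}{\Gamma(\alpha k)}\, \psi'(\tau)\,[\psi(t)-\psi(\tau)]^{\alpha k - 1}\, v(\tau)\, d\tau ,
\]
and observe that on the interval of integration $\tau \le t$, so monotonicity of $v$ gives $v(\tau) \le v(t)$; hence $v(\tau)$ may be replaced by the constant $v(t)$ and pulled outside the integral.

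Next I would interchange the summation and the integration. This is legitimate since every summand is nonnegative (Tonelli), and in fact the function series $\sum_{k\ge 1} \frac{[g(t)\Gamma(\alpha)]^k}{\Gamma(\alpha k)}\psi'(\tau)[\psi(t)-\psi(\tau)]^{\alpha k-1}$ converges uniformly on $[a,t]$ by exactly the ratio-test/$\Gamma$-asymptotics argument used at the end of the proof of Theorem \ref{teo3} (bounding $g$ by a constant $M$ via Weierstrass). After the interchange it remains to evaluate, for each $k\ge 1$, the elementary integral
\[
\int_a^t \psi'(\tau)\,[\psi(t)-\psi(\tau)]^{\alpha k - 1}\, d\tau .
\]
By the first part of Lemma \ref{lema2} with $\delta = 1$ (equivalently, by the substitution $w = \psi(t)-\psi(\tau)$), this integral equals $\dfrac{\Gamma(\alpha k)}{\Gamma(\alpha k+1)}\,(\psi(t)-\psi(a))^{\alpha k} = \dfrac{(\psi(t)-\psi(a))^{\alpha k}}{\alpha k}$.

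Substituting this back and using $\alpha k\,\Gamma(\alpha k) = \Gamma(\alpha k + 1)$, the $\Gamma(\alpha k)$ in the denominator cancels and I obtain
\[
u(t) \leq v(t) + v(t)\sum_{k=1}^\infty \frac{\big[\,g(t)\Gamma(\alpha)(\psi(t)-\psi(a))^{\alpha}\,\big]^{k}}{\Gamma(\alpha k + 1)} = v(t)\sum_{k=0}^\infty \frac{\big[\,g(t)\Gamma(\alpha)(\psi(t)-\psi(a))^{\alpha}\,\big]^{k}}{\Gamma(\alpha k + 1)} ,
\]
where the last equality absorbs the leading $v(t)$ into the $k=0$ term. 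By the definition of the Mittag-Leffler function this is precisely $v(t)\,\mathbb{E}_\alpha\!\big(g(t)\Gamma(\alpha)(\psi(t)-\psi(a))^{\alpha}\big)$, which is the asserted estimate (the $\psi(\tau)$ in the statement should read $\psi(a)$, since the bound variable $\tau$ cannot survive the integration). The only genuine subtlety is the justification of the term-by-term integration together with the convergence of the series; both are inherited verbatim from the proof of Theorem \ref{teo3}, and everything else is a one-line Beta/Gamma computation.
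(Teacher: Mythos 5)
Your proof is correct and takes essentially the same route as the paper: use monotonicity to replace $v(\tau)$ by $v(t)$, pull it out, integrate the series term by term via the Beta/Gamma evaluation $\int_a^t \psi'(\tau)[\psi(t)-\psi(\tau)]^{\alpha k-1}\,d\tau=(\psi(t)-\psi(a))^{\alpha k}/(\alpha k)$, and recognize the Mittag--Leffler series. You are in fact more careful than the paper, which carries the typo $\psi(\tau)$ (in place of $\psi(a)$) through the computation and into the statement, exactly as you flagged, and which omits the explicit justification of the sum--integral interchange.
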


\begin{proof} In fact, as $v$ is nondecreasing, so, for all $\tau \in \left[ a,t \right] $, we have $v\left( \tau \right) \leq v\left( t\right) $ and we can write
\begin{eqnarray*}
u\left( t\right) &\leq &v\left( t\right) +\int_{a}^{t}\overset{\infty }{%
\underset{k=1}{\sum }}\frac{\left[ g\left( t\right) \Gamma \left( \alpha
\right) \right] ^{k}}{\Gamma \left( \alpha k\right) }\psi ^{\prime }\left(
\tau \right) \left[ \psi \left( t\right) -\psi \left( \tau \right) \right]
^{\alpha k-1}v\left( \tau \right) d\tau  \notag \\
&=&v\left( t\right) \left[ 1+\int_{a}^{t}\overset{\infty }{\underset{k=1}{%
\sum }}\frac{\left[ g\left( t\right) \Gamma \left( \alpha \right) \right]
^{k}}{\Gamma \left( \alpha k\right) }\psi ^{\prime }\left( \tau \right) %
\left[ \psi \left( t\right) -\psi \left( \tau \right) \right] ^{\alpha
k-1}v\left( \tau \right) d\tau \right]  \notag \\
&=&v\left( t\right) \left[ 1+\overset{\infty }{\underset{k=1}{\sum }}\frac{%
\left[ g\left( t\right) \Gamma \left( \alpha \right) \right] ^{k}}{\Gamma
\left( \alpha k\right) }\frac{\left[ \psi \left( t\right) -\psi \left( \tau
\right) \right] ^{\alpha k}}{k\alpha }\right]  \notag \\
&=&v\left( t\right) \mathbb{E}_{\alpha }\left( g\left( t\right) \Gamma \left( \alpha
\right) \left[ \psi \left( t\right) -\psi \left( \tau \right) \right]
^{\alpha }\right).
\end{eqnarray*}
\end{proof}

\begin{remark}
Recently, Sousa and Oliveira {\rm \cite{JEM1}}, introduced $\psi$-Hilfer fractional derivative, and presented a class of fractional integrals and fractional derivatives, which can be obtained from such definitions, with the particular choice of the function $\psi$. In this sense, the Gronwall inequality given above can be considered as a class of inequality of Gronwall, that is, in other words, it's a generalization of the possible inequalities of Gronwall introduced via fractional integral. We suggest {\rm \cite{GRON1,GRON2,GRON3}}, where some Gronwall inequalities via fractional integral are present and discussed.
\end{remark}

\section{Existence and uniqueness}
In this section, we prove the second main result of the paper, the existence and uniqueness of solutions of the Cauchy-type problem Eq.(\ref{IVPI})-Eq.(4.2), using the fact that Volterra integral equation is equivalent to the Cauchy-type problem in the $C_{1-\gamma;\psi}[a,b]$ weighted space, by means of $\psi$-Hilfer fractional derivative.

To this end, we consider the following initial value problem
\begin{eqnarray}  \label{IVPI}
^{H}\mathbb{D}_{a+}^{\alpha ,\beta ;\psi }y\left( x\right) &=&f\left( x,y\left(
x\right) \right) \text{, }0<\alpha <1,0\leq \beta \leq 1 \\
I_{a+}^{1-\gamma ;\psi }y\left( a\right) &=&y_{a}\text{, \ \ \ \ \ \ \ \ \ \ 
}\gamma =\alpha +\beta \left( 1-\alpha \right),
\end{eqnarray}
where $^{H}\mathbb{D}_{a+}^{\alpha ,\beta ;\psi }y\left( x\right) $ is the $\psi -$Hilfer fractional derivative, $f:\left[ a,b\right) \times \mathbb{R}\rightarrow  \mathbb{R}$ and $y_{a}$ is a constant.

Applying the fractional integral operator $I_{a+}^{\alpha ;\psi }\left( \cdot \right) $ on both sides of the fractional equation Eq.(\ref{IVPI}) and using Theorem \ref{teo1}, we get
\begin{equation}\label{eq19}
y\left( x\right) =\frac{\left( \psi \left( x\right) -\psi \left( a\right) \right) ^{\gamma -1}}{\Gamma \left( \gamma \right) }I_{a+}^{\left( 1-\beta \right) \left( 1-\alpha \right) ;\psi }f\left( a\right) +I_{a+}^{\alpha ;\psi }f\left( x,y\left( x\right) \right).
\end{equation}

On the other hand, if $y$ satisfies Eq.(\ref{eq19}), then $y$ satisfies Eq.(\ref{IVPI})-Eq.(4.2). However, applying the fractional derivative operator $^{H}\mathbb{D}_{a+}^{\alpha ,\beta ;\psi }\left( \cdot \right) $ on both sides of Eq.(\ref{eq19}), we have
\begin{equation}
^{H}\mathbb{D}_{a+}^{\alpha ,\beta ;\psi }y\left( x\right) =\text{ } ^{H}\mathbb{D}_{a+}^{\alpha ,\beta ;\psi }\left( \frac{\left( \psi \left( x\right) -\psi \left( a\right) \right) ^{\gamma -1}}{\Gamma \left( \gamma \right) } I_{a+}^{\left( 1-\beta \right) \left( 1-\alpha \right) ;\psi }f\left( a\right) \right) +^{H}\mathbb{D}_{a+}^{\alpha ,\beta ;\psi }I_{a+}^{\alpha ;\psi
}f\left( x,y\left( x\right) \right).
\end{equation}

Using the Theorem {\ref{teo2} and the formula \cite{JEM1,EDP2}
\begin{equation*}
^{H}\mathbb{D}_{a+}^{\alpha ,\beta ;\psi }\left( \psi \left( x\right) -\psi \left(a\right) \right) ^{\gamma -1}=0,\text{ }0<\gamma <1,
\end{equation*}
we obtain
\begin{equation*}
^{H}\mathbb{D}_{a+}^{\alpha ,\beta ;\psi }y\left( x\right) =f\left( x,y\left(x\right) \right).
\end{equation*}

Then, we conclude that, $y\left( x\right) $ satisfies initial value problem Eq.(\ref{IVPI})-Eq.(4.2) if and only if $y\left( x\right) $ satisfies the Volterra integral equation of second kind
\begin{equation}\label{eq21}
y\left( x\right) =y_{a}\frac{\left( \psi \left( x\right) -\psi \left( a\right) \right) ^{\gamma -1}}{\Gamma \left( \gamma \right) }+\frac{1}{\Gamma \left( \alpha \right) }\int_{a}^{x}\psi ^{\prime }\left( t\right) \left( \psi \left( x\right) -\psi \left( t\right) \right) ^{\alpha
-1}f\left( t,y\left( t\right) \right) dt.
\end{equation}

\begin{lemma}\label{lema3} Let $\psi\in C^{1}([a,b],\mathbb{R})$ be a function such that $\psi$  is increasing and $\psi\neq 0$, $\forall x\in[a,b]$. If $\gamma =\alpha +\beta \left( 1-\alpha \right) $ where $ 0<\alpha <1$ and $0\leq \beta \leq 1,$ then $\psi$-Riemann fractional integral operator $I_{a+}^{\alpha ;\psi }\left( \cdot \right) $ is bounded from $C_{1-\gamma ;\psi }\left[ a,b\right] $ to $C_{1-\gamma ;\psi }\left[
a,b\right]$:
\begin{equation}\label{eq23}
\left\Vert I_{a+}^{\alpha ;\psi }f\right\Vert _{C_{1-\gamma ;\psi }\left[ a,b \right] }\leq M\frac{\Gamma \left( \gamma \right) }{\Gamma \left( \gamma +\alpha \right) }\left( \psi \left( x\right) -\psi \left( a\right) \right)^{\alpha },
\end{equation}
where, $M$ is the bound of a bounded function $f$.
\end{lemma}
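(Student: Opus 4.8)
The plan is to combine the defining property of the weighted space $C_{1-\gamma;\psi}[a,b]$ with the explicit value of $I_{a+}^{\alpha;\psi}$ on a power of $\psi(x)-\psi(a)$ supplied by Lemma \ref{lema2}. First I would note that, by definition, $f\in C_{1-\gamma;\psi}[a,b]$ means the function $t\mapsto(\psi(t)-\psi(a))^{1-\gamma}f(t)$ lies in $C[a,b]$; being continuous on a compact interval it is bounded, so, writing $M:=\|f\|_{C_{1-\gamma;\psi}[a,b]}$ for this bound, one has
\[
|f(t)|\leq M\bigl(\psi(t)-\psi(a)\bigr)^{\gamma-1},\qquad t\in(a,b].
\]
Since $0<\gamma<1$ the right-hand side has an integrable singularity at $t=a$, so the subsequent manipulations are legitimate.

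Next I would bring the modulus inside the integral defining $I_{a+}^{\alpha;\psi}f$ in Eq.(\ref{P1}). Because $\psi$ is increasing with $\psi'>0$ and $\alpha>0$, the kernel $\psi'(t)(\psi(x)-\psi(t))^{\alpha-1}$ is nonnegative and integrable on $(a,x)$, hence
\[
\bigl|I_{a+}^{\alpha;\psi}f(x)\bigr|\leq I_{a+}^{\alpha;\psi}|f|(x)\leq M\,I_{a+}^{\alpha;\psi}\bigl[(\psi(\cdot)-\psi(a))^{\gamma-1}\bigr](x).
\]
Applying Lemma \ref{lema2}(1) with $\delta=\gamma$ then gives
\[
I_{a+}^{\alpha;\psi}\bigl[(\psi(\cdot)-\psi(a))^{\gamma-1}\bigr](x)=\frac{\Gamma(\gamma)}{\Gamma(\alpha+\gamma)}\bigl(\psi(x)-\psi(a)\bigr)^{\alpha+\gamma-1}.
\]

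Multiplying the resulting estimate by $(\psi(x)-\psi(a))^{1-\gamma}$ produces
\[
\bigl(\psi(x)-\psi(a)\bigr)^{1-\gamma}\bigl|I_{a+}^{\alpha;\psi}f(x)\bigr|\leq M\,\frac{\Gamma(\gamma)}{\Gamma(\alpha+\gamma)}\bigl(\psi(x)-\psi(a)\bigr)^{\alpha},
\]
which is exactly the pointwise bound Eq.(\ref{eq23}); taking the maximum over $x\in[a,b]$ on the left and using $(\psi(x)-\psi(a))^{\alpha}\leq(\psi(b)-\psi(a))^{\alpha}$ on the right yields the norm inequality $\|I_{a+}^{\alpha;\psi}f\|_{C_{1-\gamma;\psi}[a,b]}\leq M\,\Gamma(\gamma)\,\Gamma(\alpha+\gamma)^{-1}(\psi(b)-\psi(a))^{\alpha}$, and hence the boundedness of $I_{a+}^{\alpha;\psi}$ on $C_{1-\gamma;\psi}[a,b]$. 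That $I_{a+}^{\alpha;\psi}f$ actually belongs to $C_{1-\gamma;\psi}[a,b]$ (and not merely obeys the estimate) is already contained in Lemma \ref{lema1} applied with $\mu=1-\gamma\in(0,1)$.

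I do not expect any genuine obstacle: the argument is essentially bookkeeping. The only points deserving a line of care are the interchange of $|\cdot|$ with the integral and the applicability of Lemma \ref{lema2}, both of which rest on the integrability near $t=a$ and $t=x$ of the product $(\psi(x)-\psi(t))^{\alpha-1}(\psi(t)-\psi(a))^{\gamma-1}$ — guaranteed by $\alpha>0$ and $\gamma=\alpha+\beta(1-\alpha)>0$ — and on the hypotheses that $\psi$ is increasing with $\psi'\neq0$, which make the kernel sign-definite.
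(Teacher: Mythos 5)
Your proposal is correct and follows essentially the same route as the paper: bound $|f(t)|$ by $M(\psi(t)-\psi(a))^{\gamma-1}$ using the weighted norm, exploit the positivity of the kernel, evaluate $I_{a+}^{\alpha;\psi}$ on the power function via Lemma \ref{lema2} with $\delta=\gamma$, multiply by the weight $(\psi(x)-\psi(a))^{1-\gamma}$, and invoke Lemma \ref{lema1} for membership in $C_{1-\gamma;\psi}[a,b]$. If anything, your treatment of the final step (taking the maximum and reading the right-hand side of Eq.(\ref{eq23}) at $x=b$) is cleaner than the paper's, which introduces and then discards an extraneous factor $(\psi(x)-\psi(a))^{\gamma-1}$.
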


\begin{proof} From {\rm Lemma \ref{lema1}}, the result follows. Now we prove the estimative {\rm Eq.(\ref{eq23})}. By the weighted space given in {\rm Eq.(\ref{space})}, we have
\begin{eqnarray*}
\left\Vert I_{a+}^{\alpha ;\psi }f\right\Vert _{C_{1-\gamma ;\psi }\left[ a,b%
\right] } &=&\left\Vert \left( \psi \left( x\right) -\psi \left( a\right)
\right) ^{1-\gamma }I_{a+}^{\alpha ;\psi }f\right\Vert _{C\left[ a,b\right] }
\\
&\leq &\left\Vert \left( \psi \left( x\right) -\psi \left( a\right) \right)
^{1-\gamma }f\right\Vert _{C\left[ a,b\right] } \\
&&\left\Vert \left( \psi \left( x\right) -\psi \left( a\right) \right)
^{1-\gamma }I_{a+}^{\alpha ;\psi }\left( \psi \left( x\right) -\psi \left(
a\right) \right) ^{\gamma -1}\right\Vert _{C\left[ a,b\right] } \\
&=&\left\Vert f\right\Vert _{C_{1-\gamma ;\psi }\left[ a,b\right]
}\left\Vert I_{a+}^{\alpha ;\psi }\left( \psi \left( x\right) -\psi \left(
a\right) \right) ^{\gamma -1}\right\Vert _{C_{1-\gamma ;\psi }\left[ a,b%
\right] } \\
&\leq &M\frac{\Gamma \left( \gamma \right) }{\Gamma \left( \gamma +\alpha
\right) }\frac{\left( \psi \left( x\right) -\psi \left( a\right) \right)
^{\gamma }}{\psi \left( x\right) -\psi \left( a\right) }\left( \psi \left(
x\right) -\psi \left( a\right) \right) ^{\alpha }.
\end{eqnarray*}

As $0<\gamma <1,$ then $\dfrac{\left( \psi \left( x\right) -\psi \left( a\right) \right) ^{\gamma }}{\psi \left( x\right) -\psi \left( a\right) }<1$. So, we conclude that
\begin{equation*}
\left\Vert I_{a+}^{\alpha ;\psi }f\right\Vert _{C_{1-\gamma ;\psi }\left[ a,b \right] }\leq M\frac{\Gamma \left( \gamma \right) }{\Gamma \left( \gamma +\alpha \right) }\left( \psi \left( x\right) -\psi \left( a\right) \right) ^{\alpha }.
\end{equation*}
\end{proof}

\begin{theorem}Let $\gamma =\alpha +\beta \left( 1-\alpha \right) $ where $0<\alpha <1$ and $0\leq \beta \leq 1.$ Let $f:\left[ a,b\right] \times \mathbb{R}\rightarrow \mathbb{R}
$ be a function such that $f\left( x,y\right) \in C_{1-\gamma ;\psi }\left[ a,b\right] $ for any $y\in C_{1-\gamma ;\psi }\left[ a,b\right] $ and satisfies Lipschitz condition {\rm Eq.(\ref{eq10})} with respect to $y$. Then there exists a unique solution $y\left( x\right) $ for the Cauchy-type problem {\rm Eq.(\ref{IVPI})-Eq.(4.2)} $ C_{1-\gamma ;\psi }^{\alpha ,\beta }\left[ a,b\right]$.
\end{theorem}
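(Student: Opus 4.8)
The natural approach is the Banach contraction principle applied to the integral operator attached to the Volterra equation Eq.(\ref{eq21}), carried out in the Banach space $C_{1-\gamma;\psi}[a,b]$ (which is complete, being isometric to $C[a,b]$ via $f\mapsto (\psi(\cdot)-\psi(a))^{1-\gamma}f$). By the equivalence established just above, $y$ solves Eq.(\ref{IVPI})--Eq.(4.2) in $C_{1-\gamma;\psi}[a,b]$ if and only if $y$ is a fixed point of
\begin{equation*}
(Ty)(x):=y_{a}\,\frac{(\psi(x)-\psi(a))^{\gamma-1}}{\Gamma(\gamma)}+I_{a+}^{\alpha;\psi}f(\cdot,y(\cdot))(x),\qquad x\in(a,b],
\end{equation*}
so it suffices to show $T$ has a unique fixed point in $C_{1-\gamma;\psi}[a,b]$.

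First I would check that $T$ maps $C_{1-\gamma;\psi}[a,b]$ into itself. The first summand is a constant multiple of $(\psi(x)-\psi(a))^{\gamma-1}$, which lies in $C_{1-\gamma;\psi}[a,b]$ since $(\psi(x)-\psi(a))^{1-\gamma}(\psi(x)-\psi(a))^{\gamma-1}\equiv 1\in C[a,b]$. For the second summand, the hypothesis guarantees $f(\cdot,y(\cdot))\in C_{1-\gamma;\psi}[a,b]$ whenever $y\in C_{1-\gamma;\psi}[a,b]$, and then Lemma \ref{lema3} (equivalently the first part of Lemma \ref{lema1}) gives $I_{a+}^{\alpha;\psi}f(\cdot,y(\cdot))\in C_{1-\gamma;\psi}[a,b]$. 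Hence $Ty\in C_{1-\gamma;\psi}[a,b]$.

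Next I would estimate the distance between images. The constant terms cancel, so for $y_{1},y_{2}\in C_{1-\gamma;\psi}[a,b]$ we have $(Ty_{1}-Ty_{2})(x)=I_{a+}^{\alpha;\psi}\big(f(\cdot,y_{1}(\cdot))-f(\cdot,y_{2}(\cdot))\big)(x)$; applying the Lipschitz condition Eq.(\ref{eq10}) pointwise and then the estimate of Lemma \ref{lema3} to $g:=f(\cdot,y_{1})-f(\cdot,y_{2})$, whose $C_{1-\gamma;\psi}$-norm is at most $A\|y_{1}-y_{2}\|_{C_{1-\gamma;\psi}[a,b]}$, yields
\begin{equation*}
\|Ty_{1}-Ty_{2}\|_{C_{1-\gamma;\psi}[a,b]}\leq\frac{A\,\Gamma(\gamma)}{\Gamma(\gamma+\alpha)}\,(\psi(b)-\psi(a))^{\alpha}\,\|y_{1}-y_{2}\|_{C_{1-\gamma;\psi}[a,b]}.
\end{equation*}
The main obstacle is that the constant $\omega:=\frac{A\Gamma(\gamma)}{\Gamma(\gamma+\alpha)}(\psi(b)-\psi(a))^{\alpha}$ need not be $<1$, so $T$ itself may fail to be a contraction on all of $[a,b]$. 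The standard remedy is to iterate: using the semigroup property $I_{a+}^{\alpha;\psi}\cdots I_{a+}^{\alpha;\psi}=I_{a+}^{n\alpha;\psi}$, the Lipschitz bound, and Lemma \ref{lema2}(1), an induction gives
\begin{equation*}
\|T^{n}y_{1}-T^{n}y_{2}\|_{C_{1-\gamma;\psi}[a,b]}\leq\frac{A^{n}\,\Gamma(\gamma)}{\Gamma(n\alpha+\gamma)}\,(\psi(b)-\psi(a))^{n\alpha}\,\|y_{1}-y_{2}\|_{C_{1-\gamma;\psi}[a,b]},
\end{equation*}
and the coefficient on the right tends to $0$ as $n\to\infty$, being the general term of a convergent Mittag-Leffler type series, exactly the asymptotics already used in the proof of Theorem \ref{teo3}. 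Hence $T^{n}$ is a contraction for $n$ large, and by the Banach fixed point theorem applied to $T^{n}$, the map $T$ has a unique fixed point $y\in C_{1-\gamma;\psi}[a,b]$. (Alternatively one first solves on a subinterval $[a,b_{1}]$ with $\frac{A\Gamma(\gamma)}{\Gamma(\gamma+\alpha)}(\psi(b_{1})-\psi(a))^{\alpha}<1$ and then patches finitely many such pieces covering $[a,b]$.)

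Finally, to upgrade the regularity: since this $y$ satisfies Eq.(\ref{eq21}), applying $^{H}\mathbb{D}_{a+}^{\alpha,\beta;\psi}$ to both sides and using Theorem \ref{teo2} together with $^{H}\mathbb{D}_{a+}^{\alpha,\beta;\psi}(\psi(x)-\psi(a))^{\gamma-1}=0$ gives $^{H}\mathbb{D}_{a+}^{\alpha,\beta;\psi}y=f(\cdot,y(\cdot))\in C_{1-\gamma;\psi}[a,b]$; combined with $y\in C_{1-\gamma;\psi}[a,b]$ this is precisely $y\in C_{1-\gamma;\psi}^{\alpha,\beta}[a,b]$, and uniqueness in that space follows from the uniqueness of the fixed point via the equivalence.
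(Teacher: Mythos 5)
Your proposal is correct (at the same level of rigor as the paper) but follows a genuinely different route. The paper does not invoke the Banach fixed point theorem at all: it runs Picard successive approximations $y_{m}$ directly, first on a subinterval $[a,x_{1}]$ chosen so that the smallness condition Eq.(\ref{32}), $A\,\Gamma(\gamma)(\psi(x_{1})-\psi(a))^{\alpha}/\Gamma(\gamma+\alpha)<1$, holds, obtains convergence from the geometric estimate Eq.(\ref{37}), proves uniqueness there by a separate contradiction argument against Eq.(\ref{32}), and then continues the solution over finitely many subintervals $[x_{1},x_{2}],[x_{2},x_{3}],\dots$ to cover $[a,b]$ --- essentially the patching argument you relegate to a parenthesis. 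Your main line instead works globally on $[a,b]$: the single-step Lipschitz bound from Lemma \ref{lema3}/Lemma \ref{lema2}(1) is iterated to show $T^{n}$ is a contraction for large $n$ (a Weissinger-type argument), with the decay of $A^{n}\Gamma(\gamma)(\psi(b)-\psi(a))^{n\alpha}/\Gamma(n\alpha+\gamma)$ being exactly the Mittag--Leffler asymptotics the paper already uses inside Theorem \ref{teo3}. What your route buys is the elimination of the smallness condition, the subinterval continuation, and the separate uniqueness step (uniqueness comes for free from the fixed point theorem); what the paper's route buys is an explicit constructive scheme ($y_{m}$ and the rate Eq.(\ref{37})) without needing the eventually-contractive refinement of Banach's theorem. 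The final regularity upgrade is also handled slightly differently but equivalently: the paper compares $^{H}\mathbb{D}_{a+}^{\alpha,\beta;\psi}y_{m}$ with $^{H}\mathbb{D}_{a+}^{\alpha,\beta;\psi}y$ via the Lipschitz condition in Eq.(\ref{314}), whereas you apply the $\psi$-Hilfer derivative to the Volterra equation Eq.(\ref{eq21}) using Theorem \ref{teo2} and the annihilation of $(\psi(x)-\psi(a))^{\gamma-1}$, which is precisely the paper's own equivalence computation in Section 4; note that in both versions the formal hypotheses of Theorem \ref{teo2} (a $C^{1}$ integrand) are glossed over, so your argument shares, but does not worsen, that gap.
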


\begin{proof} The Volterra integral equation makes sense in any interval $\left[ a,x_{1}\right] \subset \left[ a,b\right]$. So, we choose $x_{1}$ such that
\begin{equation}\label{32}
A\frac{\Gamma \left( \gamma \right) }{\Gamma \left( \gamma +\alpha \right) }%
\left( \psi \left( x\right) -\psi \left( a\right) \right) ^{\alpha }<1,
\end{equation}
holds and first we prove the existence of unique solution $y\in C_{1-\gamma ;\psi }\left[ a,x_{1}\right]$. From the Picard's sequences, we consider
\begin{equation}\label{33}
y_{0}\left( x\right) =y_{a}\frac{\left( \psi \left( x\right) -\psi \left(
a\right) \right) ^{\gamma -1}}{\Gamma \left( \gamma \right) },\text{ }\gamma
=\alpha +\beta \left( 1-\alpha \right) 
\end{equation}
and
\begin{equation}\label{34}
y_{m}\left( x\right) =y_{0}\left( x\right) +\frac{1}{\Gamma \left( \alpha
\right) }\int_{a}^{x}\psi ^{\prime }\left( t\right) \left( \psi \left(
x\right) -\psi \left( t\right) \right) ^{\alpha -1}f\left( t,y_{m-1}\left(
t\right) \right) dt,\text{ }m\in \mathbb{N}.
\end{equation}

We now show that $y_{m}\left( x\right) \in C_{1-\gamma ;\psi }\left[ a,b\right] .$ From {\rm Eq.(\ref{33})}, it follows that $y_{0}\left( x\right) \in C_{1-\gamma ;\psi }\left[ a,b\right]$. By {\rm Lemma \ref{lema1}} $I_{a+}^{\alpha ;\psi }f $ is bounded from $C_{1-\gamma ;\psi }\left[ a,b\right] $ to $C_{1-\gamma ;\psi }\left[ a,b\right] $, which gives $y_{m}\left( x\right) \in C_{1-\gamma ;\psi }\left[ a,b\right] $, $m\in  \mathbb{N} $ and $x\in \left[ a,b\right]$.

Using {\rm Eq.(\ref{33})} and {\rm Eq.(\ref{34})}, we have
\begin{eqnarray*}
&&\left\Vert y_{1}\left( x\right) -y_{0}\left( x\right) \right\Vert
_{C_{1-\gamma ;\psi }\left[ a,x_{1}\right] } \\
&=&\left\Vert y_{0}\left( x\right) +\frac{1}{\Gamma \left( \alpha \right) }%
\int_{a}^{x}\psi ^{\prime }\left( t\right) \left( \psi \left( x\right) -\psi
\left( t\right) \right) ^{\alpha -1}f\left( t,y_{0}\left( t\right) \right)
dt-y_{0}\left( x\right) \right\Vert _{C_{1-\gamma ;\psi }\left[ a,x_{1}%
\right] } \\
&=&\left\Vert I_{a+}^{\alpha ;\psi }f\left( x,y_{0}\left( x\right) \right)
\right\Vert _{C_{1-\gamma ;\psi }\left[ a,x_{1}\right] }.
\end{eqnarray*}

By means of {\rm Lemma \ref{lema3}}, we have 
\begin{equation}\label{35}
\left\Vert y_{1}\left( x\right) -y_{0}\left( x\right) \right\Vert _{C_{1-\gamma ;\psi }\left[ a,x_{1}\right] }\leq M\frac{\Gamma \left( \gamma \right) }{\Gamma \left( \gamma +\alpha \right) }\left( \psi \left( x\right) -\psi \left( a\right) \right) ^{\alpha }.
\end{equation}

Further,we get 
\begin{eqnarray*}
\left\Vert y_{2}\left( x\right) -y_{1}\left( x\right) \right\Vert
_{C_{1-\gamma ;\psi }\left[ a,x_{1}\right] } &=&\left\Vert I_{a+}^{\alpha
;\psi }\left[ f\left( x,y\left( x\right) \right) -f\left( x,y_{0}\left(
x\right) \right) \right] \right\Vert _{C_{1-\gamma ;\psi }\left[ a,x_{1}%
\right] }  \notag  \label{36} \\
&\leq &\frac{\Gamma \left( \gamma \right) \left( \psi \left( x_{1}\right)
-\psi \left( a\right) \right) ^{\alpha }}{\Gamma \left( \gamma +\alpha
\right) }A\left\Vert y\left( x\right) -y_{0}\left( x\right) \right\Vert
_{C_{1-\gamma ;\psi }\left[ a,x_{1}\right] }  \notag \\
&\leq &M\frac{\Gamma \left( \gamma \right) \left( \psi \left( x_{1}\right)
-\psi \left( a\right) \right) ^{\alpha }}{\Gamma \left( \gamma +\alpha
\right) }\left( A\frac{\Gamma \left( \gamma \right) \left( \psi \left(
x_{1}\right) -\psi \left( a\right) \right) ^{\alpha }}{\Gamma \left( \gamma
+\alpha \right) }\right).  
\end{eqnarray*}

Continuing this process $m$-times, we can write
\begin{equation}\label{37}
\left\Vert y_{m}\left( x\right) -y_{m-1}\left( x\right) \right\Vert _{C_{1-\gamma ;\psi }\left[ a,x_{1}\right] }\leq M\frac{\Gamma \left( \gamma \right) \left( \psi \left( x_{1}\right) -\psi \left( a\right) \right)
^{\alpha }}{\Gamma \left( \gamma +\alpha \right) }\left( A\frac{\Gamma \left( \gamma \right) \left( \psi \left( x_{1}\right) -\psi \left( a\right)
\right) ^{\alpha }}{\Gamma \left( \gamma +\alpha \right) }\right) ^{m-1}.
\end{equation}

Taking the limit $m\rightarrow \infty $ on both sides of {\rm Eq.(\ref{37})} and remember the following condition $A\dfrac{ \Gamma \left( \gamma \right) \left( \psi \left( x_{1}\right) -\psi \left( a\right) \right) ^{\alpha }}{\Gamma \left( \gamma +\alpha \right) }<1$, we get
\begin{equation}\label{38}
\left\Vert y_{m}\left( x\right) -y\left( x\right) \right\Vert _{C_{1-\gamma ;\psi }\left[ a,x_{1}\right] }\rightarrow 0.
\end{equation}

Again, by {\rm Lemma \ref{lema3}}, it follows that 
\begin{eqnarray*}
&&\left\Vert I_{a+}^{\alpha ;\psi }f\left( x,y_{m}\left( x\right) \right)
-I_{a+}^{\alpha ;\psi }f\left( x,y\left( x\right) \right) \right\Vert
_{C_{1-\gamma ;\psi }\left[ a,x_{1}\right] } \\
&\leq &\left\Vert \left( \psi \left( x_{1}\right) -\psi \left( a\right)
\right) ^{1-\gamma }\left( f\left( x,y_{m}\left( x\right) \right) -f\left(
x,y\left( x\right) \right) \right) \right\Vert _{C\left[ a,x_{1}\right] } \\
&&\times \left\Vert \left( \psi \left( x_{1}\right) -\psi \left( a\right)
\right) ^{1-\gamma }I_{a+}^{\alpha ;\psi }\left( \psi \left( x_{1}\right)
-\psi \left( a\right) \right) ^{\gamma -1}\right\Vert _{C\left[ a,x_{1}%
\right] } \\
&\leq &A\frac{\Gamma \left( \gamma \right) }{\Gamma \left( \gamma +\alpha
\right) }\left( \psi \left( x_{1}\right) -\psi \left( a\right) \right)
^{\alpha }\left\Vert y_{m}\left( x\right) -y\left( x\right) \right\Vert
_{C_{1-\gamma ;\psi }\left[ a,x_{1}\right] }
\end{eqnarray*}
and hence by {\rm Eq.(\ref{38})}
\begin{equation}\label{39}
\left\Vert I_{a+}^{\alpha ;\psi }f\left( x,y_{m}\left( x\right) \right) -I_{a+}^{\alpha ;\psi }f\left( x,y\left( x\right) \right) \right\Vert _{C_{1-\gamma ;\psi }\left[ a,x_{1}\right] }\rightarrow 0 
\end{equation}
as $m\rightarrow \infty$.

Using {\rm Eq.(\ref{38})} and {\rm Eq.(\ref{39})}, we conclude that $y\left( x\right) $ is the solution of integral equation {\rm Eq.(\ref{eq21})} in $\ C_{1-\gamma ;\psi }\left[ a,x_{1}\right]$ .

On the other hand, considering two solutions $y\left( x\right) $ and $z\left( x\right) $ of the integral equation {\rm Eq.(\ref{eq21})} on $\left[ a,x_{1}\right] $, we prove that the solution $y\left( x\right) $ is unique. Substituting them into {\rm Eq.(\ref{39})} and using {\rm Lemma \ref{lema1}} with Lipschitz condition {\rm Eq.(\ref{eq10})}, we get
\begin{eqnarray}\label{310}
\left\Vert y\left( x\right) -z\left( x\right) \right\Vert _{_{C_{1-\gamma
;\psi }\left[ a,x_{1}\right] }} &\leq &\left\Vert I_{a+}^{\alpha ;\psi
}\left( \psi \left( x\right) -\psi \left( a\right) \right) ^{\gamma
-1}\right\Vert _{C_{1-\gamma ;\psi }\left[ a,x_{1}\right] }  \notag \\
&&\left\Vert f\left( x,y\left( x\right) \right) -f\left( x,z\left( x\right)
\right) \right\Vert _{C_{1-\gamma ;\psi }\left[ a,x_{1}\right] }  \notag \\
&\leq &A\frac{\Gamma \left( \gamma \right) \left( \psi \left( x\right) -\psi
\left( a\right) \right) ^{\alpha }}{\Gamma \left( \gamma +\alpha \right) }%
\left\Vert y\left( x\right) -z\left( x\right) \right\Vert _{_{C_{1-\gamma
;\psi }\left[ a,x_{1}\right] }}.
\end{eqnarray}

From this it follows that $A\dfrac{\Gamma \left( \gamma \right) \left( \psi \left( x\right) -\psi \left( a\right) \right) ^{\alpha }}{\Gamma \left( \gamma +\alpha \right) }\geq 1,$ contradicting the condition {\rm Eq.(\ref{32})}. Thus there exists $y\left( x\right) =y_{1}\left( x\right) \in C_{1-\gamma ;\psi }\left[ a,x_{1}\right] $ as a unique solution on $\left[ a,x_{1}\right] .$

Now, consider the interval $\left[ x,x_{2}\right] $, where $%
x_{2}=x_{1}+h_{1},$ $h_{1}>0$ such that $x_{2}<b.$ Now the integral equation {\rm Eq.(\ref{310})} takes the form 
\begin{eqnarray}\label{311}
y\left( x\right)  &=&\frac{y_{a}}{\Gamma \left( \gamma \right) }\left( \psi
\left( x\right) -\psi \left( a\right) \right) ^{\gamma -1}+\frac{1}{\Gamma
\left( \alpha \right) }\int_{x_{1}}^{x}\psi ^{\prime }\left( t\right) \left(
\psi \left( x\right) -\psi \left( t\right) \right) ^{\alpha -1}f\left(
t,y\left( t\right) \right) dt+  \notag  \label{311} \\
&&+\frac{1}{\Gamma \left( \alpha \right) }\int_{a}^{x_{1}}\psi ^{\prime
}\left( t\right) \left( \psi \left( x\right) -\psi \left( t\right) \right)
^{\alpha -1}f\left( t,y\left( t\right) \right) dt,
\end{eqnarray}
with $x\in \left[ x_{1},x_{2}\right]$.

As seen previously, $y\left( x\right) $ is uniquely defined on $\left[ a,x_{1}\right] $ and the integral equation {\rm Eq.(\ref{311})} is known and can be written as follows
\begin{equation}\label{312}
y\left( x\right) =y_{0}^{\ast }\left( x\right) +\frac{1}{\Gamma \left(
\alpha \right) }\int_{x_{1}}^{x}\psi ^{\prime }\left( t\right) \left( \psi
\left( x\right) -\psi \left( t\right) \right) ^{\alpha -1}f\left( t,y\left(
t\right) \right) dt,
\end{equation}
with $x\in \left[ x_{1},x_{2}\right] $, where
\begin{equation}\label{313}
y_{0}^{\ast }\left( x\right) =\frac{y_{a}}{\Gamma \left( \gamma \right) }\left( \psi \left( x\right) -\psi \left( a\right) \right) ^{\gamma -1}+\frac{1}{\Gamma \left( \alpha \right) }\int_{a}^{x_{1}}\psi ^{\prime }\left(
t\right) \left( \psi \left( x\right) -\psi \left( t\right) \right) ^{\alpha-1}f\left( t,y\left( t\right) \right) dt
\end{equation}
is a known function. Using the same argument as above, we deduce that there exist a unique solution $y\left( x\right) =y_{2}\left( x\right) \in C_{1-\gamma ;\psi }\left[ x_{1},x_{2}\right] $ on $\left[ x_{1},x_{2}\right]
.$ Taking interval $\left[ x_{2},x_{3}\right] $, where $x_{3}=x_{2}+h_{2}$, $h_{2}>0$ such that $x_{3}<b,$ and repeating the above steps, we obtain a unique solution $y\left( x\right) \in C_{1-\gamma
;\psi }\left[ a,b\right] $, of the integral equation {\rm Eq.(\ref{eq21})} such that $y\left( x\right) =y_{j}\left( x\right) \in C_{1-\gamma ;\psi }\left[ x_{j-1},x_{j}\right] ,$ for $j=1,2,...,l$ and $\ a=x_{0}<x_{2}<\cdot \cdot \cdot <x_{l}=b$. Using differential equation {\rm Eq.(\ref{IVPI})} and Lipschitz condition {\rm Eq.(\ref{eq10})}, we obtain
\begin{eqnarray}\label{314}
\left\Vert ^{H}D_{a+}^{\alpha ,\beta ;\psi }y_{m}\left( x\right) -\text{ } ^{H}D_{a+}^{\alpha ,\beta ;\psi }y\left( x\right) \right\Vert _{C_{1-\gamma ;\psi }\left[ a,b\right] } &=&\left\Vert f\left( x,y_{m}\left( x\right)
\right) -f\left( x,y\left( x\right) \right) \right\Vert _{C_{1-\gamma ;\psi }\left[ a,b\right] }  \notag \\
&\leq &A\left\Vert y_{m}\left( x\right) -y\left( x\right) \right\Vert _{C_{1-\gamma ;\psi }\left[ a,b\right] }.
\end{eqnarray}

Therefore, by {\rm Eq.(\ref{38})} and {\rm Eq.(\ref{314})} implies that $^{H}\mathbb{D}_{a+}^{\alpha ,\beta ;\psi }y\left( x\right) \in C_{1-\gamma ;\psi }\left[ a,b\right] $ and thus we conclude the result.
\end{proof}

\section{Continuous dependence}
Working with initial value problem in which it can models a physical phenomenon, it is desirable that any little perturbation in the initial data does not influence the solution. In this section, we study the data continuous dependence of the Cauchy problem solution for a fractional differential equation using $\psi$-Hilfer derivative via the generalized Gronwall inequality, obtained in Eq.(\ref{jose}).

\begin{theorem}Let $f,\psi\in C([a,b],\mathbb{R})$  two functions such that $\psi$ is increasing and $\psi'(x)\neq 0$, for all $x\in [a,b]$ and $f$ satisfying Lipschitz condition {\rm Eq.(\ref{eq10})} in $\mathbb{R}$. Let $\alpha >0$, $\delta>0$ such that $0<\alpha -\delta <\alpha \leq 1$ and $0 \leq \beta \leq 1$. For $a\leq x\leq h<b$, assume that $y$ is the solution of the initial value problem $(IVP)$ {\rm Eq.(\ref{IVPI})} and $\overset{\ast }{y}$ is the solution of the following IVP
\begin{eqnarray}\label{IVPII}
^{H}\mathbb{D}_{a+}^{\alpha -\delta ,\beta;\psi }\overset{\ast }{y}\left( x\right)
&=&f\left( x,\overset{\ast }{y}\left( x\right) \right) ,\text{ }0<\alpha <1,%
\text{ }0\leq \beta \leq 1 \\
I_{a+}^{1-\gamma -\delta \left( \beta -1\right) ;\psi }\overset{\ast }{y}%
\left( x\right) _{x=a} &=&\text{ }\overset{\ast }{y}_{a},\text{ }\gamma
=\alpha +\beta \left( 1-\alpha \right).
\end{eqnarray}

Then, for $a<x\leq h$,
\begin{equation*}
\left\vert \overset{\ast }{y}\left( x\right) -y\left( x\right) \right\vert
\leq B\left( x\right) +\int_{a}^{x}\left[ \underset{n=1}{\overset{\infty }{%
\sum }}\left( \frac{A}{\Gamma \left( \alpha \right) }\Gamma \left( \alpha
-\delta \right) \right) ^{n}\frac{\left( \psi \left( x\right) -\psi \left(
t\right) \right) ^{n\left( \alpha -\delta \right) -1}}{\Gamma \left( n\left(
\alpha -\delta \right) \right) }B\left( t\right) \right] dt
\end{equation*}
holds, where
\begin{eqnarray}\label{43}
B\left( x\right) &=&\left\vert \frac{\overset{\ast }{y}_{a}\left( \psi
\left( x\right) -\psi \left( a\right) \right) ^{\gamma +\delta \left( \beta
-1\right) -1}}{\Gamma \left( \gamma +\delta \left( \beta -1\right) \right) }-%
\frac{y_{a}\left( \psi \left( x\right) -\psi \left( a\right) \right)
^{\gamma -1}}{\Gamma \left( \gamma \right) }\right\vert  \notag \\
&&+\left\Vert f\right\Vert \left\vert \frac{\left( \psi \left( x\right)
-\psi \left( a\right) \right) ^{\alpha -\delta }}{\Gamma \left( \alpha
-\delta +1\right) }-\frac{\left( \psi \left( x\right) -\psi \left( a\right)
\right) ^{\alpha -\delta }}{\left( \alpha -\delta \right) \Gamma \left(
\alpha \right) }\right\vert  \notag \\
&&+\left\Vert f\right\Vert \left\vert \frac{\left( \psi \left( x\right)
-\psi \left( a\right) \right) ^{\alpha -\delta }}{\left( \alpha -\delta
\right) \Gamma \left( \alpha \right) }-\frac{\left( \psi \left( x\right)
-\psi \left( a\right) \right) ^{\alpha }}{\Gamma \left( \alpha +1\right) }%
\right\vert
\end{eqnarray}
and
\begin{equation*}
\left\Vert f\right\Vert =\underset{x\in \left[ a,b\right] }{\max }\left\vert
f\left( x,y\left( x\right) \right) \right\vert.
\end{equation*}
\end{theorem}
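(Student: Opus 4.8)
The plan is to rewrite both Cauchy-type problems as Volterra integral equations, subtract them, and insert the outcome into the generalized Gronwall inequality of Theorem~\ref{teo3}. Just as in the derivation of Eq.(\ref{eq21}) --- apply $I_{a+}^{\alpha ;\psi }$ to the fractional equation and use Theorems \ref{teo1} and \ref{teo2} --- the function $y$ solves Eq.(\ref{IVPI}) together with its initial condition if and only if
\begin{equation*}
y\left( x\right) =\frac{y_{a}}{\Gamma \left( \gamma \right) }\left( \psi \left( x\right) -\psi \left( a\right) \right) ^{\gamma -1}+\frac{1}{\Gamma \left( \alpha \right) }\int_{a}^{x}\psi ^{\prime }\left( t\right) \left( \psi \left( x\right) -\psi \left( t\right) \right) ^{\alpha -1}f\left( t,y\left( t\right) \right) dt .
\end{equation*}
Running the same argument on Eq.(\ref{IVPII}), with $\alpha $ replaced by $\alpha -\delta $ and with the associated weight $\gamma ^{\ast }:=\left( \alpha -\delta \right) +\beta \left( 1-\alpha +\delta \right) =\gamma +\delta \left( \beta -1\right) $ (exactly the exponent appearing in the initial condition for $\overset{\ast }{y}$), gives
\begin{equation*}
\overset{\ast }{y}\left( x\right) =\frac{\overset{\ast }{y}_{a}}{\Gamma \left( \gamma ^{\ast }\right) }\left( \psi \left( x\right) -\psi \left( a\right) \right) ^{\gamma ^{\ast }-1}+\frac{1}{\Gamma \left( \alpha -\delta \right) }\int_{a}^{x}\psi ^{\prime }\left( t\right) \left( \psi \left( x\right) -\psi \left( t\right) \right) ^{\alpha -\delta -1}f\left( t,\overset{\ast }{y}\left( t\right) \right) dt .
\end{equation*}

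I would then subtract the two identities and apply the triangle inequality. The difference of the two non-integral terms is exactly the first line of $B\left( x\right) $ in Eq.(\ref{43}). For the difference of the two integrals the key device is a three-fold splitting: write $f\left( t,\overset{\ast }{y}\left( t\right) \right) =\bigl[ f\left( t,\overset{\ast }{y}\left( t\right) \right) -f\left( t,y\left( t\right) \right) \bigr] +f\left( t,y\left( t\right) \right) $ and then add and subtract $\frac{1}{\Gamma \left( \alpha \right) }\int_{a}^{x}\psi ^{\prime }\left( t\right) \left( \psi \left( x\right) -\psi \left( t\right) \right) ^{\alpha -\delta -1}f\left( t,y\left( t\right) \right) dt$, so that the integral difference becomes the sum of: (i) the \emph{Lipschitz term} $\frac{1}{\Gamma \left( \alpha -\delta \right) }\int_{a}^{x}\psi ^{\prime }\left( t\right) \left( \psi \left( x\right) -\psi \left( t\right) \right) ^{\alpha -\delta -1}\bigl[ f\left( t,\overset{\ast }{y}\left( t\right) \right) -f\left( t,y\left( t\right) \right) \bigr] dt$; (ii) a \emph{coefficient term} $\bigl( \tfrac{1}{\Gamma \left( \alpha -\delta \right) }-\tfrac{1}{\Gamma \left( \alpha \right) }\bigr) \int_{a}^{x}\psi ^{\prime }\left( t\right) \left( \psi \left( x\right) -\psi \left( t\right) \right) ^{\alpha -\delta -1}f\left( t,y\left( t\right) \right) dt$; and (iii) a \emph{kernel term} $\frac{1}{\Gamma \left( \alpha \right) }\int_{a}^{x}\psi ^{\prime }\left( t\right) \bigl[ \left( \psi \left( x\right) -\psi \left( t\right) \right) ^{\alpha -\delta -1}-\left( \psi \left( x\right) -\psi \left( t\right) \right) ^{\alpha -1}\bigr] f\left( t,y\left( t\right) \right) dt$.

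For (ii) and (iii) I would bound $\left\vert f\left( t,y\left( t\right) \right) \right\vert \leq \left\Vert f\right\Vert $ and evaluate the remaining elementary integrals via Lemma \ref{lema2} applied to the constant function (equivalently, the substitution $u=\psi \left( t\right) $), i.e.\ $\int_{a}^{x}\psi ^{\prime }\left( t\right) \left( \psi \left( x\right) -\psi \left( t\right) \right) ^{\mu -1}dt=\mu ^{-1}\left( \psi \left( x\right) -\psi \left( a\right) \right) ^{\mu }$ for $\mu =\alpha -\delta $ and $\mu =\alpha $; using $\Gamma \left( \alpha -\delta +1\right) =\left( \alpha -\delta \right) \Gamma \left( \alpha -\delta \right) $ and $\Gamma \left( \alpha +1\right) =\alpha \Gamma \left( \alpha \right) $ this identifies (ii) with the second line of $B\left( x\right) $ and (iii) with the third line. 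Term (i) is controlled by the Lipschitz hypothesis Eq.(\ref{eq10}), giving the bound $\frac{A}{\Gamma \left( \alpha -\delta \right) }\int_{a}^{x}\psi ^{\prime }\left( t\right) \left( \psi \left( x\right) -\psi \left( t\right) \right) ^{\alpha -\delta -1}\left\vert \overset{\ast }{y}\left( t\right) -y\left( t\right) \right\vert dt$. Collecting the pieces, one arrives at an inequality of the type
\begin{equation*}
\left\vert \overset{\ast }{y}\left( x\right) -y\left( x\right) \right\vert \leq B\left( x\right) +g\int_{a}^{x}\psi ^{\prime }\left( t\right) \left( \psi \left( x\right) -\psi \left( t\right) \right) ^{\alpha -\delta -1}\left\vert \overset{\ast }{y}\left( t\right) -y\left( t\right) \right\vert dt ,
\end{equation*}
with $g$ a positive constant (hence nonnegative and nondecreasing).

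The conclusion then follows by applying Theorem \ref{teo3} with $u=\left\vert \overset{\ast }{y}-y\right\vert $, $v=B$, the order $\alpha $ there replaced by $\alpha -\delta $, and $g$ as above: since $B$ is locally integrable on $\left[ a,h\right] $ (its only singularity, at $t=a$, has exponent $>-1$ because $\gamma ,\gamma ^{\ast }>0$), the estimate Eq.(\ref{jose}) becomes precisely the asserted series bound. The step I expect to be the true obstacle is the control of the kernel term (iii): for $\psi \left( x\right) -\psi \left( t\right) >1$ the bracket $\left( \psi \left( x\right) -\psi \left( t\right) \right) ^{\alpha -\delta -1}-\left( \psi \left( x\right) -\psi \left( t\right) \right) ^{\alpha -1}$ changes sign, so pulling $\left\Vert f\right\Vert $ outside the integral and identifying the remainder with the elementary integral above is justified only on a range $a\leq x\leq h<b$ of the kind imposed in the statement; on a wider range one must retain $\int_{a}^{x}\psi ^{\prime }\left( t\right) \bigl\vert \left( \psi \left( x\right) -\psi \left( t\right) \right) ^{\alpha -\delta -1}-\left( \psi \left( x\right) -\psi \left( t\right) \right) ^{\alpha -1}\bigr\vert \,dt$. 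Keeping track of the $\Gamma $-factors so that the constant of the linear term lines up with the coefficient $A\,\Gamma \left( \alpha -\delta \right) /\Gamma \left( \alpha \right) $ in the printed series is the remaining point requiring care.
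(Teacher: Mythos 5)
Your proposal follows essentially the same route as the paper: both Cauchy problems are converted to Volterra equations (with the shifted weight $\gamma^{\ast}=\gamma+\delta(\beta-1)$, exactly as you compute), the difference is split into an initial-data term, a coefficient term, a kernel term and a Lipschitz term, the first three are absorbed into $B(x)$ via Lemma \ref{lema2} with $\Gamma(\mu+1)=\mu\Gamma(\mu)$, and the Gronwall inequality of Theorem \ref{teo3} (order $\alpha-\delta$, constant $g$) finishes the argument. The only substantive divergence is where the $\Gamma$-factors sit in the splitting: the paper writes the difference of integrals as $\bigl(\tfrac{1}{\Gamma(\alpha-\delta)}-\tfrac{1}{\Gamma(\alpha)}\bigr)K_{\alpha-\delta}\bigl[f(\cdot,\overset{\ast}{y})\bigr]+\tfrac{1}{\Gamma(\alpha)}K_{\alpha-\delta}\bigl[f(\cdot,\overset{\ast}{y})-f(\cdot,y)\bigr]+\tfrac{1}{\Gamma(\alpha)}\bigl(K_{\alpha-\delta}-K_{\alpha}\bigr)\bigl[f(\cdot,y)\bigr]$, so the Lipschitz term carries $A/\Gamma(\alpha)$ and Gronwall reproduces the printed coefficient $\bigl(A\Gamma(\alpha-\delta)/\Gamma(\alpha)\bigr)^{n}$ exactly; your splitting puts $1/\Gamma(\alpha-\delta)$ on the Lipschitz term, giving Gronwall constant $A/\Gamma(\alpha-\delta)$ and series coefficient $A^{n}$, which for $0<\alpha-\delta<\alpha\leq 1$ (where $\Gamma(\alpha-\delta)\geq\Gamma(\alpha)$) is a slightly sharper bound that still implies the stated one, so this is a cosmetic rather than a logical difference — though if you want the printed constant verbatim you should mimic the paper's factorization. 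Two further remarks: your placement of $f(t,y(t))$ (rather than $f(t,\overset{\ast}{y}(t))$) in the coefficient term is actually more consistent with the definition $\left\Vert f\right\Vert=\max_{x}\left\vert f(x,y(x))\right\vert$ than the paper's own estimate; and the sign-change caveat you raise for the kernel term is genuine, but it is equally unaddressed in the paper, which likewise bounds $\left\vert\int\psi'\left[(\psi(x)-\psi(t))^{\alpha-\delta-1}-(\psi(x)-\psi(t))^{\alpha-1}\right]f\,dt\right\vert$ by $\left\Vert f\right\Vert$ times the absolute value of the elementary integral; retaining the modulus inside the integral, as you suggest, is the safe repair and only changes $B$ by replacing that difference with a sum of two positive pieces.
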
 

\begin{proof} IVP's {\rm Eq.(\ref{IVPI})-Eq.(4.2)} and {\rm Eq.(\ref{IVPII})}-{\rm Eq.(5.2)}, have similar integral solutions and are given by
\begin{equation*}
y\left( x\right) =\frac{y_{a}\left( \psi \left( x\right) -\psi \left(
a\right) \right) ^{\gamma -1}}{\Gamma \left( \gamma \right) }+\frac{1}{%
\Gamma \left( \alpha \right) }\int_{a}^{x}\psi ^{\prime }\left( t\right)
\left( \psi \left( x\right) -\psi \left( t\right) \right) ^{\alpha
-1}f\left( t,y\left( t\right) \right) dt
\end{equation*}
and
\begin{equation*}
\overset{\ast }{y}\left( x\right) =\frac{\overset{\ast }{y}_{a}\left( \psi
\left( x\right) -\psi \left( a\right) \right) ^{\gamma +\delta \left( \beta
-1\right) -1}}{\Gamma \left( \gamma +\delta \left( \beta -1\right) \right) }+%
\frac{1}{\Gamma \left( \alpha -\delta \right) }\int_{a}^{x}\psi ^{\prime
}\left( t\right) \left( \psi \left( x\right) -\psi \left( t\right) \right)
^{\alpha -\delta -1}f\left( t,\overset{\ast }{y}\left( t\right) \right) dt
\end{equation*}
respectively. It follows that 
\begin{eqnarray*}
\left\vert \overset{\ast }{y}\left( x\right) -y\left( x\right) \right\vert 
&\leq &\left\vert \frac{\overset{\ast }{y}_{a}\left( \psi \left( x\right)
-\psi \left( a\right) \right) ^{\gamma +\delta \left( \beta -1\right) -1}}{%
\Gamma \left( \gamma +\delta \left( \beta -1\right) \right) }-\frac{%
y_{a}\left( \psi \left( x\right) -\psi \left( a\right) \right) ^{\gamma -1}}{%
\Gamma \left( \gamma \right) }\right\vert +  \notag \\
&&\left\vert \frac{1}{\Gamma \left( \alpha -\delta \right) }\int_{a}^{x}\psi
^{\prime }\left( t\right) \left( \psi \left( x\right) -\psi \left( t\right)
\right) ^{\alpha -\delta -1}f\left( t,\overset{\ast }{y}\left( t\right)
\right) dt\right.   \notag \\
&&-\left. \frac{1}{\Gamma \left( \alpha \right) }\int_{a}^{x}\psi ^{\prime
}\left( t\right) \left( \psi \left( x\right) -\psi \left( t\right) \right)
^{\alpha -1}f\left( t,y\left( t\right) \right) dt\right\vert   \notag \\
&=&\left\vert \frac{\overset{\ast }{y}_{a}\left( \psi \left( x\right) -\psi
\left( a\right) \right) ^{\gamma +\delta \left( \beta -1\right) -1}}{\Gamma
\left( \gamma +\delta \left( \beta -1\right) \right) }-\frac{y_{a}\left(
\psi \left( x\right) -\psi \left( a\right) \right) ^{\gamma -1}}{\Gamma
\left( \gamma \right) }\right\vert +  \notag \\
&&\left\vert \int_{a}^{x}\psi ^{\prime }\left( t\right) \left[ \frac{\left(
\psi \left( x\right) -\psi \left( t\right) \right) ^{\alpha -\delta -1}}{%
\Gamma \left( \alpha -\delta \right) }-\frac{\left( \psi \left( x\right)
-\psi \left( t\right) \right) ^{\alpha -\delta -1}}{\Gamma \left( \alpha
\right) }\right] f\left( t,\overset{\ast }{y}\left( t\right) \right)
dt\right. +  \notag \\
&&\frac{1}{\Gamma \left( \alpha \right) }\int_{a}^{x}\psi ^{\prime }\left(
t\right) \left( \psi \left( x\right) -\psi \left( t\right) \right) ^{\alpha
-\delta -1}\left[ f\left( t,\overset{\ast }{y}\left( t\right) \right)
-f\left( t,y\left( t\right) \right) \right] dt  \notag \\
&&+\left. \frac{1}{\Gamma \left( \alpha \right) }\int_{a}^{x}\psi ^{\prime
}\left( t\right) \left[ \left( \psi \left( x\right) -\psi \left( t\right)
\right) ^{\alpha -\delta -1}-\left( \psi \left( x\right) -\psi \left(
t\right) \right) ^{\alpha -1}\right] f\left( t,y\left( t\right) \right)
dt\right\vert   \notag \\
&\leq &\left\vert \frac{\overset{\ast }{y}_{a}\left( \psi \left( x\right)
-\psi \left( a\right) \right) ^{\gamma +\delta \left( \beta -1\right) -1}}{%
\Gamma \left( \gamma +\delta \left( \beta -1\right) \right) }-\frac{%
y_{a}\left( \psi \left( x\right) -\psi \left( a\right) \right) ^{\gamma -1}}{%
\Gamma \left( \gamma \right) }\right\vert +  \notag \\
&&\left\Vert f\right\Vert \left\vert \int_{a}^{x}\psi ^{\prime }\left(
t\right) \left[ \frac{\left( \psi \left( x\right) -\psi \left( t\right)
\right) ^{\alpha -\delta -1}}{\Gamma \left( \alpha -\delta \right) }-\frac{%
\left( \psi \left( x\right) -\psi \left( t\right) \right) ^{\alpha -\delta
-1}}{\Gamma \left( \alpha \right) }\right] dt\right\vert   \notag \\
&&+\frac{A}{\Gamma \left( \alpha \right) }\int_{a}^{x}\psi ^{\prime }\left(
t\right) \left( \psi \left( x\right) -\psi \left( t\right) \right) ^{\alpha
-\delta -1}\left\vert \overset{\ast }{y}\left( t\right) -y\left( t\right)
\right\vert dt  \notag \\
&&+\left\Vert f\right\Vert \left\vert \int_{a}^{x}\psi ^{\prime }\left(
t\right) \left[ \frac{\left( \psi \left( x\right) -\psi \left( t\right)
\right) ^{\alpha -\delta -1}}{\Gamma \left( \alpha \right) }-\frac{\left(
\psi \left( x\right) -\psi \left( t\right) \right) ^{\alpha -1}}{\Gamma
\left( \alpha \right) }\right] dt\right\vert   \notag \\
&=&B\left( x\right) +\frac{A}{\Gamma \left( \alpha \right) }\int_{a}^{x}\psi
^{\prime }\left( t\right) \left( \psi \left( x\right) -\psi \left( t\right)
\right) ^{\alpha -\delta -1}\left\vert \overset{\ast }{y}\left( t\right)
-y\left( t\right) \right\vert dt
\end{eqnarray*}
where $B(x)$ is defined by {\rm Eq.(\ref{43})}. Applying the Gronwall inequality {\rm Eq.(\ref{jose})}, we conclude that
\begin{equation*}
\left\vert \overset{\ast }{y}\left( x\right) -y\left( x\right) \right\vert
\leq B\left( x\right) +\int_{a}^{x}\left[ \overset{\infty }{\underset{n=1}{%
\sum }}\left( \frac{A\Gamma \left( \alpha -\delta \right) }{\Gamma \left(
\alpha \right) }\right) ^{n}\frac{\psi ^{\prime }\left( t\right) \left( \psi
\left( x\right) -\psi \left( t\right) \right) ^{n\left( \alpha -\delta
\right) -1}}{\Gamma \left( n\left( \alpha -\delta \right) \right) }B\left(
t\right) \right] dt.
\end{equation*}
\end{proof}

Next, we consider the fractional differential equation {\rm Eq.(\ref{IVPI})} with small change in the initial condition Eq.(4.2)  
\begin{equation*}
I_{a+}^{1-\gamma ;\psi }y\left( x\right) _{x=a}=y_{a}+\varepsilon \text{, }%
\gamma =\alpha +\beta \left( 1-\alpha \right),
\end{equation*}
where $\varepsilon $ is an arbitrary positive constant. We state and prove the result as
follows.

\begin{theorem} Suppose that assumptions of {\rm Theorem \ref{teo3}} hold. Suppose $y\left(
x\right) $ and $\overset{\ast }{y}\left( x\right) $ are solutions of IVP
{\rm Eq.(\ref{IVPI})}-{\rm Eq.(4.2)} and {\rm Eq.(\ref{IVPII})}-{\rm Eq.(5.2)} respectively. Then
\begin{equation*}
\left\vert y\left( x\right) -\overset{\ast }{y}\left( x\right) \right\vert
\leq \left\vert \varepsilon \right\vert \left( \psi \left( x\right) -\psi
\left( a\right) \right) ^{\gamma -1}\mathbb{E}_{\alpha ,\gamma }\left[ A\left( \psi
\left( x\right) -\psi \left( a\right) \right) ^{\alpha }\right] ,\text{ }%
x\left( a,b\right] 
\end{equation*}
holds, where $\mathbb{E}_{\alpha ,\gamma }\left( z\right) =\underset{k=0}{\overset{%
\infty }{\sum }}\dfrac{z^{k}}{\Gamma \left( \alpha k+\gamma \right) }$ is the
Mittag-Leffler function with $Re(\alpha)>0$.
\end{theorem}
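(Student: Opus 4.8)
The plan is to recast both initial value problems as the equivalent Volterra integral equations (the equivalence being the one that leads to Eq.(\ref{eq21})), subtract them, use the Lipschitz hypothesis Eq.(\ref{eq10}) to close the estimate, feed the resulting integral inequality into the generalized Gronwall inequality of Theorem~\ref{teo3}, and finally sum the resulting series explicitly via the beta-type integral of Lemma~\ref{lema2}, which is what produces the two-parameter Mittag-Leffler function.

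\textbf{Setting up the integral inequality.} First I would write, exactly as in Eq.(\ref{eq21}),
\begin{equation*}
y(x)=\frac{y_{a}\left(\psi(x)-\psi(a)\right)^{\gamma-1}}{\Gamma(\gamma)}+\frac{1}{\Gamma(\alpha)}\int_{a}^{x}\psi'(t)\left(\psi(x)-\psi(t)\right)^{\alpha-1}f(t,y(t))\,dt,
\end{equation*}
and the analogous representation for $\overset{\ast}{y}$, the solution of Eq.(\ref{IVPI}) with the perturbed datum $I_{a+}^{1-\gamma;\psi}\overset{\ast}{y}(x)_{x=a}=y_{a}+\varepsilon$, which is the same identity with $y_{a}$ replaced by $y_{a}+\varepsilon$ and $y$ by $\overset{\ast}{y}$. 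Subtracting, applying the triangle inequality and then Eq.(\ref{eq10}), and abbreviating $u(x)=\left|y(x)-\overset{\ast}{y}(x)\right|$, I obtain
\begin{equation*}
u(x)\leq \frac{|\varepsilon|\left(\psi(x)-\psi(a)\right)^{\gamma-1}}{\Gamma(\gamma)}+\frac{A}{\Gamma(\alpha)}\int_{a}^{x}\psi'(t)\left(\psi(x)-\psi(t)\right)^{\alpha-1}u(t)\,dt.
\end{equation*}
This has precisely the form demanded by Theorem~\ref{teo3}, with $v(x)=|\varepsilon|\left(\psi(x)-\psi(a)\right)^{\gamma-1}/\Gamma(\gamma)$ (nonnegative and integrable on $[a,b]$ since $0<\gamma<1$) and $g\equiv A/\Gamma(\alpha)$ (a nonnegative constant, hence nondecreasing and continuous); $u$ is nonnegative and integrable because $y,\overset{\ast}{y}\in C_{1-\gamma;\psi}[a,b]$.

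\textbf{Applying Gronwall and summing the series.} Theorem~\ref{teo3} then yields, using $g(x)\Gamma(\alpha)=A$,
\begin{equation*}
u(x)\leq v(x)+\frac{|\varepsilon|}{\Gamma(\gamma)}\sum_{k=1}^{\infty}\frac{A^{k}}{\Gamma(\alpha k)}\int_{a}^{x}\psi'(t)\left(\psi(x)-\psi(t)\right)^{\alpha k-1}\left(\psi(t)-\psi(a)\right)^{\gamma-1}\,dt,
\end{equation*}
where the interchange of sum and integral is legitimate since all summands are nonnegative (Tonelli) and the series converges by the same ratio-test argument already used in the proof of Theorem~\ref{teo3}. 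For each $k$, Lemma~\ref{lema2}(1) (equivalently, the generalized beta integral) evaluates the inner integral to $\Gamma(\alpha k)\Gamma(\gamma)\left(\psi(x)-\psi(a)\right)^{\alpha k+\gamma-1}/\Gamma(\alpha k+\gamma)$. Substituting, the factors $\Gamma(\alpha k)$ and $\Gamma(\gamma)$ cancel, and I am left with
\begin{equation*}
u(x)\leq |\varepsilon|\left(\psi(x)-\psi(a)\right)^{\gamma-1}\left[\frac{1}{\Gamma(\gamma)}+\sum_{k=1}^{\infty}\frac{\left[A\left(\psi(x)-\psi(a)\right)^{\alpha}\right]^{k}}{\Gamma(\alpha k+\gamma)}\right]=|\varepsilon|\left(\psi(x)-\psi(a)\right)^{\gamma-1}\mathbb{E}_{\alpha,\gamma}\!\left[A\left(\psi(x)-\psi(a)\right)^{\alpha}\right],
\end{equation*}
because the bracketed expression is exactly $\sum_{k\geq 0}z^{k}/\Gamma(\alpha k+\gamma)$ with $z=A\left(\psi(x)-\psi(a)\right)^{\alpha}$ (the $k=0$ term supplying the $1/\Gamma(\gamma)$). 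This is the asserted estimate.

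I do not expect a genuine analytic obstacle here: the argument is essentially bookkeeping around Theorem~\ref{teo3} and Lemma~\ref{lema2}. The two points needing care are (i) matching the perturbed problem to the hypotheses of Theorem~\ref{teo3}, in particular checking that the function $v$ built from the initial-data term is integrable in spite of the $\left(\psi(x)-\psi(a)\right)^{\gamma-1}$ singularity at $x=a$, which uses $\gamma>0$; and (ii) tracking the $\Gamma$-factor cancellation so that the series collapses to the two-parameter $\mathbb{E}_{\alpha,\gamma}$ rather than to the one-parameter $\mathbb{E}_{\alpha}$.
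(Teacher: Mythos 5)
Your proof is correct, but it follows a genuinely different route from the paper's. You recast both problems as Volterra equations, subtract, use the Lipschitz condition to get
\begin{equation*}
\left\vert y\left( x\right) -\overset{\ast }{y}\left( x\right) \right\vert \leq \frac{\left\vert \varepsilon \right\vert \left( \psi \left( x\right) -\psi \left( a\right) \right) ^{\gamma -1}}{\Gamma \left( \gamma \right) }+\frac{A}{\Gamma \left( \alpha \right) }\int_{a}^{x}\psi ^{\prime }\left( t\right) \left( \psi \left( x\right) -\psi \left( t\right) \right) ^{\alpha -1}\left\vert y\left( t\right) -\overset{\ast }{y}\left( t\right) \right\vert dt,
\end{equation*}
and then invoke Theorem \ref{teo3} with $v(x)=\left\vert \varepsilon \right\vert \left( \psi (x)-\psi (a)\right) ^{\gamma -1}/\Gamma (\gamma )$ and constant $g$, summing the resulting series termwise via Lemma \ref{lema2}(1) so that the $\Gamma (\alpha k)$ and $\Gamma (\gamma )$ factors cancel and the two-parameter function $\mathbb{E}_{\alpha ,\gamma }$ appears; your bookkeeping here is accurate. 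The paper, by contrast, never applies the Gronwall inequality in this proof: it works with the Picard iterates $y_{m}$ and $\overset{\ast }{y}_{m}$ from the existence theorem, proves by induction the partial-sum bound $\left\vert y_{m}(x)-\overset{\ast }{y}_{m}(x)\right\vert \leq \left\vert \varepsilon \right\vert \left( \psi (x)-\psi (a)\right) ^{\gamma -1}\sum_{j=0}^{m}A^{j}\left( \psi (x)-\psi (a)\right) ^{\alpha j}/\Gamma (\alpha j+\gamma )$, and then lets $m\rightarrow \infty $. Your argument is shorter, works directly with the solutions rather than their approximants (so it does not need the convergence of the iterates to both $y$ and $\overset{\ast }{y}$), and it actually delivers on the section's stated aim of deriving continuous dependence from the generalized Gronwall inequality; what it requires instead is the check, which you do make, that $v$ and the difference are nonnegative and integrable despite the $\left( \psi (x)-\psi (a)\right) ^{\gamma -1}$ singularity at $x=a$, and the Tonelli interchange of sum and integral. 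Both arguments reach the same Mittag-Leffler estimate, and your interpretation of $\overset{\ast }{y}$ as the solution of the same equation with perturbed datum $y_{a}+\varepsilon $ matches what the paper's proof actually uses.
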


\begin{proof} By {\rm Theorem \ref{teo3}}, we have $y\left( x\right) =\underset{
m\rightarrow \infty }{\lim }y_{m}\left( x\right) $ with $y_{0}\left( x\right) $ as defined in {\rm Eq.(\ref{IVPII})} and {\rm Eq.(5.2)}, respectively. Clearly, we can write $\overset{\ast }{y}\left( x\right) =\underset{m\rightarrow \infty }{\lim }$ $\overset{\ast }{y_{m}}\left( x\right) $ and
\begin{equation}\label{46}
\overset{\ast }{y_{0}}\left( x\right) =\frac{\left( y_{a}+\varepsilon
\right) }{\Gamma \left( \gamma \right) }\left( \psi \left( x\right) -\psi
\left( a\right) \right) ^{\gamma -1},
\end{equation}
and
\begin{equation*}
\overset{\ast }{y_{m}}\left( x\right) =\overset{\ast }{y_{0}}\left( x\right)
+\frac{1}{\Gamma \left( \alpha \right) }\int_{a}^{x}\psi ^{\prime }\left(
t\right) \left( \psi \left( x\right) -\psi \left( t\right) \right) ^{\alpha
-1}f\left( t,\overset{\ast }{y}_{m-1}\left( t\right) \right) dt.
\end{equation*}

From {\rm Eq.(\ref{eq21})} and {\rm Eq.(\ref{46})}, we have
\begin{eqnarray}\label{48}
\left\vert y_{0}\left( x\right) -\overset{\ast }{y_{0}}\left( x\right)
\right\vert  &=&\left\vert \frac{y_{a}}{\Gamma \left( \gamma \right) }\left(
\psi \left( x\right) -\psi \left( a\right) \right) ^{\gamma -1}-\frac{\left(
y_{a}+\varepsilon \right) }{\Gamma \left( \gamma \right) }\left( \psi \left(
x\right) -\psi \left( a\right) \right) ^{\gamma -1}\right\vert \notag  \\
&\leq &\left\vert \varepsilon \right\vert \frac{\left( \psi \left( x\right)
-\psi \left( a\right) \right) ^{\gamma -1}}{\Gamma \left( \gamma \right) }
\end{eqnarray}

Using relations {\rm Eq.(\ref{34})} and {\rm Eq.(\ref{46})}, the Lipschitz condition
{\rm Eq.(\ref{eq10})} and the inequality {\rm Eq.(\ref{48})}, we get
\begin{eqnarray*}
&&\left\vert y_{1}\left( x\right) -\overset{\ast }{y_{1}}\left( x\right)
\right\vert   \notag \\
&=&\left\vert \frac{\varepsilon \left( \psi \left( x\right) -\psi \left(
a\right) \right) ^{\gamma -1}}{\Gamma \left( \gamma \right) }+\frac{1}{
\Gamma \left( \alpha \right) }\int_{a}^{x}\psi ^{\prime }\left( t\right)
\left( \psi \left( x\right) -\psi \left( t\right) \right) ^{\alpha -1}\left(
f\left( t,y_{0}\left( t\right) \right) -f\left( t,\overset{\ast }{y}
_{0}\left( t\right) \right) \right) dt\right\vert   \notag \\
&\leq &\left\vert \varepsilon \right\vert \frac{\left( \psi \left( x\right)
-\psi \left( a\right) \right) ^{\gamma -1}}{\Gamma \left( \gamma \right) }+
\frac{A}{\Gamma \left( \alpha \right) }\int_{a}^{x}\psi ^{\prime }\left(
t\right) \left( \psi \left( x\right) -\psi \left( t\right) \right) ^{\alpha
-1}\left\vert y_{0}\left( t\right) -\overset{\ast }{y}_{0}\left( t\right)
\right\vert dt  \notag \\
&\leq &\left\vert \varepsilon \right\vert \frac{\left( \psi \left( x\right)
-\psi \left( a\right) \right) ^{\gamma -1}}{\Gamma \left( \gamma \right) }+
\frac{A\left\vert \varepsilon \right\vert \left( \psi \left( x\right) -\psi
\left( a\right) \right) ^{\gamma -1}}{\Gamma \left( \alpha \right) \Gamma
\left( \gamma \right) }\int_{a}^{x}\psi ^{\prime }\left( t\right) \left(
\psi \left( x\right) -\psi \left( t\right) \right) ^{\alpha -1}dt  \notag \\
&=&\left\vert \varepsilon \right\vert \frac{\left( \psi \left( x\right)
-\psi \left( a\right) \right) ^{\gamma -1}}{\Gamma \left( \gamma \right) }
+A\left\vert \varepsilon \right\vert \frac{\left( \psi \left( x\right) -\psi
\left( a\right) \right) ^{\gamma +\alpha -1}}{\Gamma \left( \gamma +\alpha
\right) }.
\end{eqnarray*}

From this it follows that
\begin{equation*}
\left\vert y_{1}\left( x\right) -\overset{\ast }{y_{1}}\left( x\right)
\right\vert \leq \left\vert \varepsilon \right\vert \left( \psi \left(
x\right) -\psi \left( a\right) \right) ^{\gamma -1}\overset{1}{\underset{j=0}%
{\sum }}A^{j}\frac{\left( \psi \left( x\right) -\psi \left( a\right) \right)
^{\alpha j}}{\Gamma \left( \gamma +\alpha j\right) }.
\end{equation*}

On the other hand, we have
\begin{eqnarray*}
&&\left\vert y_{2}\left( x\right) -\overset{\ast }{y_{2}}\left( x\right)
\right\vert  \\
&=&\left\vert \frac{\varepsilon \left( \psi \left( x\right) -\psi \left(
a\right) \right) ^{\gamma -1}}{\Gamma \left( \gamma \right) }+\frac{1}{%
\Gamma \left( \alpha \right) }\int_{a}^{x}\psi ^{\prime }\left( t\right)
\left( \psi \left( x\right) -\psi \left( t\right) \right) ^{\alpha -1}\left(
f\left( t,y_{1}\left( t\right) \right) -f\left( t,\overset{\ast }{y}%
_{1}\left( t\right) \right) \right) dt\right\vert  \\
&\leq &\left\vert \varepsilon \right\vert \frac{\left( \psi \left( x\right)
-\psi \left( a\right) \right) ^{\gamma -1}}{\Gamma \left( \gamma \right) }+%
\frac{A}{\Gamma \left( \alpha \right) }\int_{a}^{x}\psi ^{\prime }\left(
t\right) \left( \psi \left( x\right) -\psi \left( t\right) \right) ^{\alpha
-1}\left\vert y_{1}\left( t\right) -\overset{\ast }{y}_{1}\left( t\right)
\right\vert dt \\
&\leq &\left\vert \varepsilon \right\vert \frac{\left( \psi \left( x\right)
-\psi \left( a\right) \right) ^{\gamma -1}}{\Gamma \left( \gamma \right) }+%
\frac{A\left\vert \varepsilon \right\vert }{\Gamma \left( \alpha \right) }%
\int_{a}^{x}\psi ^{\prime }\left( t\right) \left( \psi \left( x\right) -\psi
\left( t\right) \right) ^{\alpha -1}\left( \psi \left( t\right) -\psi \left(
a\right) \right) ^{\gamma -1} \\
&&\times \overset{1}{\underset{j=0}{\sum }}A^{j}\frac{\left( \psi \left(
t\right) -\psi \left( a\right) \right) ^{\alpha j}}{\Gamma \left( \gamma
+\alpha j\right) }dt \\
&=&\left\vert \varepsilon \right\vert \frac{\left( \psi \left( x\right)
-\psi \left( a\right) \right) ^{\gamma -1}}{\Gamma \left( \gamma \right) }+%
\frac{A\left\vert \varepsilon \right\vert }{\Gamma \left( \alpha \right) }%
\overset{1}{\underset{j=0}{\sum }}\frac{A^{j}}{\Gamma \left( \gamma +\alpha
j\right) } \\
&&\times \left\{ \int_{a}^{x}\psi ^{\prime }\left( t\right) \left( \psi
\left( x\right) -\psi \left( t\right) \right) ^{\alpha -1}\left( \psi \left(
t\right) -\psi \left( a\right) \right) ^{\gamma +\alpha j-1}dt\right\}  \\
&\leq &\left\vert \varepsilon \right\vert \frac{\left( \psi \left( x\right)
-\psi \left( a\right) \right) ^{\gamma -1}}{\Gamma \left( \gamma \right) }+%
\frac{A\left\vert \varepsilon \right\vert \left( \psi \left( x\right) -\psi
\left( a\right) \right) ^{\gamma -1}}{\Gamma \left( \alpha \right) \Gamma
\left( \gamma \right) }\int_{a}^{x}\psi ^{\prime }\left( t\right) \left(
\psi \left( x\right) -\psi \left( t\right) \right) ^{\alpha -1}dt+ \\
&&+\frac{A^{2}\left\vert \varepsilon \right\vert \left( \psi \left( x\right)
-\psi \left( a\right) \right) ^{\gamma +\alpha -1}}{\Gamma \left( \alpha
\right) \Gamma \left( \alpha +\gamma \right) }\int_{a}^{x}\psi ^{\prime
}\left( t\right) \left( \psi \left( x\right) -\psi \left( t\right) \right)
^{\alpha -1}dt \\
&\leq &\left\vert \varepsilon \right\vert \frac{\left( \psi \left( x\right)
-\psi \left( a\right) \right) ^{\gamma -1}}{\Gamma \left( \gamma \right) }+%
\frac{A\left\vert \varepsilon \right\vert \left( \psi \left( x\right) -\psi
\left( a\right) \right) ^{\gamma -1}\left( \psi \left( x\right) -\psi \left(
a\right) \right) ^{\alpha }}{\Gamma \left( \alpha +\gamma \right) } \\
&&+\frac{A^{2}\left\vert \varepsilon \right\vert \left( \psi \left( x\right)
-\psi \left( a\right) \right) ^{\gamma -1}\left( \psi \left( x\right) -\psi
\left( a\right) \right) ^{2\alpha }}{\Gamma \left( 2\alpha +\gamma \right) }
\\
&=&\left\vert \varepsilon \right\vert \left( \psi \left( x\right) -\psi
\left( a\right) \right) ^{\gamma -1}\underset{j=0}{\overset{2}{\sum }}A^{j}%
\frac{\left( \psi \left( x\right) -\psi \left( a\right) \right) ^{\alpha j}}{%
\Gamma \left( \alpha j+\gamma \right) }.
\end{eqnarray*}

Using the induction, we get
\begin{equation} \label{410}
\left\vert y_{m}\left( x\right) -\overset{\ast }{y_{m}}\left( x\right)
\right\vert \leq \left\vert \varepsilon \right\vert \left( \psi \left(
x\right) -\psi \left( a\right) \right) ^{\gamma -1}\overset{m}{\underset{j=0}%
{\sum }}A^{j}\frac{\left( \psi \left( x\right) -\psi \left( a\right) \right)
^{\alpha j}}{\Gamma \left( \alpha j+\gamma \right) }.
\end{equation}

Taking the limit $m\rightarrow \infty $ in {\rm Eq.(\ref{410})}, we conclude that
\begin{eqnarray*}
\left\vert y_{m}\left( x\right) -\overset{\ast }{y_{m}}\left( x\right) \right\vert  &\leq &\left\vert \varepsilon \right\vert \left( \psi \left( x\right) -\psi \left( a\right) \right) ^{\gamma -1}\overset{m}{\underset{j=0}
{\sum }}A^{j}\frac{\left( \psi \left( x\right) -\psi \left( a\right) \right) ^{\alpha j}}{\Gamma \left( \alpha j+\gamma \right) }  \notag \\
&=&\left\vert \varepsilon \right\vert \left( \psi \left( x\right) -\psi \left( a\right) \right) ^{\gamma -1}\mathbb{E}_{\alpha ,\gamma }\left( A\left( \psi \left( x\right) -\psi \left( a\right) \right) ^{\alpha }\right) .
\end{eqnarray*}
\end{proof}

\section{Concluding remarks}

In this paper, combining the fractional integral with respect to another $\psi$ function and the classical Gronwall inequality \cite{DRA}, we proposed a generalized Gronwall inequality. For this inequality, we discussed the existence and uniqueness of solutions of the Cauchy-type problem by means of $\psi$-Hilfer fractional derivative recently introduced \cite{JEM1}. On the other hand, as application, the continuous dependence of the Cauchy-type problem on data was studied via generalized Gronwall inequality.

\section*{Acknowledgment}
The authors thank Espa\c{c}o da Escrita, Coordenadoria Geral da Universidade - Unicamp, for the language services provided.

\bibliography{ref}
\bibliographystyle{plain}

\end{document}